\renewcommand{\to}{\longrightarrow}
\newtheorem{Theorem}{Theorem}[section]
\newtheorem{Definition}[Theorem]{Definition}
\newtheorem{Lemma}[Theorem]{Lemma}
\newtheorem{Proposition}[Theorem]{Proposition}
\newtheorem{Corollary}[Theorem]{Corollary}
\newtheorem{Remark}[Theorem]{Remark}
\newtheorem{Example}[Theorem]{Example}
\DeclareMathOperator{\Der}{Der}
\DeclareMathOperator{\pdeg}{pdeg}
\DeclareMathOperator{\coker}{coker}
\newcommand \KK {{\mathbb K}}
\def \X(#1){\{x_1,\dots, x_{#1}\}}
\newcommand{\A}{\mathcal{A}}
\newcommand{\Ker}{\operatorname{Ker}}
\newcommand \ideal[1] {\langle #1 \rangle}
\begin{document}

\title[Freeness for multiarrangements of hyperplanes]{Freeness for multiarrangements of hyperplanes over arbitrary fields}

\begin{abstract}
In this paper, we study the class of free multiarrangements of hyperplanes. Specifically, we investigate the relations between freeness over a field of finite characteristic and freeness over $\mathbb{Q}$.
\end{abstract}

\author{Michele Torielli}
\address{Michele Torielli, Department of Mathematics, GI-CoRE GSB, Hokkaido University, Kita 10, Nishi 8, Kita-Ku, Sapporo 060-0810, Japan.}
\email{torielli@math.sci.hokudai.ac.jp}


\date{\today}
\maketitle


\section{Introduction}
Let $V$ be a vector space of dimension $l$ over a field $\KK$. Fix a system of coordinate $(x_1,\dots, x_l)$ of $V^\ast$. 
We denote by $S = S(V^\ast) = \KK[x_1,\dots, x_l]$ the symmetric algebra. 
A hyperplane arrangement $\A = \{H_1, \dots, H_n\}$ is a finite collection of hyperplanes in $V$.

The theory of freeness of hyperplane arrangements is a key notion which connects arrangement theory with algebraic geometry and combinatorics. By definition, an arrangement is free if and only if its module of logarithmic derivations is a free module. A lot it is known about free arrangements, however there is still some mystery around the notion of freeness.
The notion of freeness was introduced by Saito in \cite{saito} for the case of hypersurfaces in the analytic category. The special case of hyperplane arrangements was firstly studied by Terao in \cite{terao1980arrangementsI}, where he showed that we can pass from analytic to algebraic considerations. In \cite{ziegler1986multiarrangements}, Ziegler extended this theory to the class of multiarrangements of hyperplanes, i.e. arrangements in which each hyperplane has a positive integer multiplicity. In addition, Ziegler put in relation the notion of free arrangements and the one of free multiarrangements. Since their introduction, free (multi)arrangements have been intensively studied in connection with the famous Terao's conjecture. See for example  \cite{yoshinaga2004characterization},\cite{abe2008euler}, \cite{hoge2014ziegler}, \cite{yoshinaga2014freeness} and \cite{Gin-freearr}.

The purpose of this paper is to extend the work of  \cite{palezzato2018free} in order study the connections between freeness of multiarrangements over a field of characteristic zero and over a finite field, and to describe in which cases the two situations are related and how.

\section{Preliminares on hyperplane arrangements}\label{sec:arr}

In this section, we recall the terminology, the basic notations and some fundamental results related to multiarrangements of hyperplanes.

Let $\KK$ be a field. A finite set of affine hyperplanes $\A =\{H_1, \dots, H_n\}$ in $\KK^l$ 
is called a \textbf{hyperplane arrangement}. For each hyperplane $H_i$ we fix a defining equation $\alpha_i\in S= \KK[x_1,\dots, x_l]$ such that $H_i = \alpha_i^{-1}(0)$, 
and let $Q(\A)=\prod_{i=1}^n\alpha_i$. An arrangement $\A$ is called \textbf{central} if each $H_i$ contains the origin of $\KK^l$. 
In this case, the defining equation $\alpha_i\in S$ is linear homogeneous, and hence $Q(\A)$ is homogeneous of degree $n$. In this paper, we will only consider central arrangements.

\begin{Definition} A \textbf{multiarrangement of hyperplanes} is a pair $(\A, \bold{m})$ of a central arrangement $\A$ with a map $\bold{m}\colon \A\to \mathbb{Z}_{\ge0}$, called the \textbf{multiplicity}. We also put $Q(\A, \bold{m})=\prod_{i=1}^n\alpha_i^{\bold{m}(H_i)}$ and $|\bold{m}|=\sum_{i=1}^n\bold{m}(H_i)$.
\end{Definition} 

The theory of multiarrangements is a generalization of the one of arrangements. In fact, an arrangement $\A$ can be identified with $(\A,\bold{1})$ a multiarrangement with constant multiplicity $\bold{m}\equiv 1$, which is sometimes called a \textbf{simple arrangement}.

We denote by $\Der_{\KK^l} =\{\sum_{i=1}^l f_i\partial_{x_i}~|~f_i\in S\}$ the $S$-module of \textbf{polynomial vector fields} on $\KK^l$ (or $S$-derivations). 
Let $\delta =  \sum_{i=1}^l f_i\partial_{x_i}\in \Der_{\KK^l}$. Then $\delta$ is  said to be \textbf{homogeneous of polynomial degree} $d$ if $f_1, \dots, f_l$ are homogeneous polynomials of degree~$d$ in $S$. 
In this case, we write $\pdeg(\delta) = d$.

\begin{Definition} 
Let $(\A,\bold{m})$ be a multiarrangement in $\KK^l$. Define the \textbf{module of vector fields logarithmic tangent} to $\A$ with multiplicity $\bold{m}$ (or logarithmic vector fields) by
$$D(\A,\bold{m}) = \{\delta\in \Der_{\KK^l}~|~ \delta(\alpha_i) \in \ideal{\alpha_i^{\bold{m}(H_i)}}S, \forall i\}.$$
\end{Definition}

The module $D(\A,\bold{m})$ is obviously a graded $S$-module and plays a central role in the theory of free multiarrangements of hyperplanes. In general, in contrast to the case of simple arrangements, we have that
$D(\A,\bold{m})$ does not coincide with $\{\delta\in \Der_{\KK^l}~|~ \delta(Q(\A)) \in \ideal{ Q(\A,\bold{m})} S\}$. 

\begin{Definition} 
A multiarrangement $(\A, \bold{m})$ in $\KK^l$ is said to be \textbf{free with exponents} $(e_1,\dots,e_l)$
if and only if $D(\A, \bold{m})$ is a free $S$-module and there exists a basis $\delta_1,\dots,\delta_l \in D(\A, \bold{m})$ 
such that $\pdeg(\delta_i) = e_i$, or equivalently $D(\A, \bold{m})\cong\bigoplus_{i=1}^lS(-e_i)$.
\end{Definition}

As described in \cite{ziegler1986multiarrangements}, since $D(\A,\bold{m})$ is a reflexive $S$-module, then any multiarrangement in $\KK^2$ is free.

\begin{Example} Consider the multiarrangement $(\A, \bold{m})$ in $\mathbb{R}^2$ with $Q(\A, \bold{m})=x^2y^2(x-y)$. Then $(\A, \bold{m})$ is free with exponents $(2,3)$. In fact, $D(\A, \bold{m})$ is free and it is generated by 
$$\delta_1=x^2\frac{\partial}{\partial x}+y^2\frac{\partial}{\partial y},$$
$$\delta_2=(x-y)y^2\frac{\partial}{\partial y}.$$
\end{Example}

\begin{Remark}\label{rem:detdivisoblebyeq}
Consider $\delta_1,\dots,\delta_l \in D(\A, \bold{m})$, then $\det(\delta_i(x_j))_{i,j}$ is divisible by $Q(\A, \bold{m})$.
\end{Remark}

In \cite{saito}, after introducing the notion of freeness, Saito described one of the most famous characterizations of freeness,
see also \cite{ziegler1986multiarrangements}. Saito's criterion
checks if $(\A, \bold{m})$ is free or not by looking at the determinant of the coefficient 
matrix of $\delta_1,\dots,\delta_l\in D(\A, \bold{m})$. Notice that the original statement is for
characteristic zero. However, as noted in \cite{terao1983free}, this statement holds true for any characteristic.

\begin{Theorem}[Saito's criterion]\label{theo:saitocrit}
Let $(\A, \bold{m})$ be a multiarrangement in $\KK^l$ and $\delta_1, \dots, \delta_l \in D(\A, \bold{m})$. Then the following facts are equivalent
\begin{enumerate}
\item $D(\A,\bold{m})$ is free with basis $\delta_1, \dots, \delta_l$, i. e. $D(\A, \bold{m}) = S\cdot\delta_1\oplus \cdots \oplus S \cdot\delta_l$.
\item $\det(\delta_i(x_j))_{i,j}=c Q(\A, \bold{m})$, where $c\in \KK\setminus\{0\}$.
\item $\delta_1, \dots, \delta_l$ are linearly independent over $S$ and $\sum_{i=1}^l\pdeg(\delta_i)=|\bold{m}|$.
\end{enumerate}
\end{Theorem}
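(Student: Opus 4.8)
The plan is to prove Saito's criterion by establishing the cycle of implications $(1) \Rightarrow (2) \Rightarrow (3) \Rightarrow (1)$, working throughout with the $l \times l$ coefficient matrix $M = (\delta_i(x_j))_{i,j}$ over $S$, whose determinant I will denote $\det M$. The starting observation, already recorded in Remark~\ref{rem:detdivisoblebyeq}, is that $Q(\A,\bold{m})$ divides $\det M$ in $S$; this is proved by localizing at each $\alpha_i$ and using the defining condition $\delta_k(\alpha_i) \in \ideal{\alpha_i^{\bold{m}(H_i)}}S$, together with the fact that the $\alpha_i^{\bold{m}(H_i)}$ are pairwise coprime. A companion degree count shows that if $\delta_1,\dots,\delta_l$ are homogeneous with $\det M \neq 0$, then $\det M$ is homogeneous of degree $\sum_i \pdeg(\delta_i)$, while $\deg Q(\A,\bold{m}) = |\bold{m}|$.

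For $(1) \Rightarrow (2)$: assuming $\delta_1,\dots,\delta_l$ form an $S$-basis of $D(\A,\bold{m})$, I first note that the Euler-type derivations or, more simply, suitable multiples of $\partial_{x_j}$ lie in $D(\A,\bold{m})$ after multiplying by $Q(\A,\bold{m})$, so $\det M \neq 0$. Then I express an arbitrary element of $D(\A,\bold{m})$ in the given basis; comparing with the fact that $Q(\A,\bold{m}) \cdot \partial_{x_j} \in D(\A,\bold{m})$ yields that $\det M$ divides $Q(\A,\bold{m}) \cdot (\text{something})$, and combined with Remark~\ref{rem:detdivisoblebyeq} and comparison of degrees one forces $\det M = c\, Q(\A,\bold{m})$ with $c \in \KK$. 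Nonvanishing of $\det M$ gives $c \neq 0$. The implication $(2) \Rightarrow (3)$ is the easiest: $\det M = c\,Q(\A,\bold{m}) \neq 0$ immediately gives $S$-linear independence of the $\delta_i$, and taking degrees in the equality $\det M = c\,Q(\A,\bold{m})$ gives $\sum_i \pdeg(\delta_i) = |\bold{m}|$ (after reducing to the homogeneous case by taking homogeneous components, since $D(\A,\bold{m})$ is graded).

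The substantive implication is $(3) \Rightarrow (1)$. Suppose $\delta_1,\dots,\delta_l$ are $S$-linearly independent with $\sum_i \pdeg(\delta_i) = |\bold{m}|$. Linear independence means $\det M \neq 0$; by Remark~\ref{rem:detdivisoblebyeq}, $Q(\A,\bold{m}) \mid \det M$, and the degree hypothesis forces $\det M = c\, Q(\A,\bold{m})$ for some nonzero $c \in \KK$ (degrees match, so the quotient is a nonzero constant). Now I must show that $\delta_1,\dots,\delta_l$ generate $D(\A,\bold{m})$ over $S$. Given $\theta \in D(\A,\bold{m})$, Cramer's rule writes $\theta = \sum_i (g_i / \det M)\, \delta_i$ with $g_i \in S$ obtained as determinants of matrices with one row of $M$ replaced by the coordinate vector of $\theta$; the task is to show $\det M \mid g_i$ in $S$, i.e. that each $g_i / \det M$ is a polynomial. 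Since $\det M = c\,Q(\A,\bold{m})$ and $S$ is a UFD, it suffices to show that $\alpha_i^{\bold{m}(H_i)}$ divides $g_k$ for each $i,k$; this is checked by localizing at the prime $\ideal{\alpha_i}$ and using that both $\theta$ and the $\delta_j$ satisfy $\delta(\alpha_i) \in \ideal{\alpha_i^{\bold{m}(H_i)}}$, so that in the local ring one can perform row operations exhibiting the required divisibility. The main obstacle is precisely this local divisibility argument: one needs to handle the multiplicity $\bold{m}(H_i) > 1$ carefully (the single-hyperplane local computation), and to make sure the argument is characteristic-free --- which it is, since it only uses that $S$ is a UFD, Cramer's rule, and coprimality of distinct $\alpha_i$, none of which see the characteristic. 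I would isolate the one-hyperplane local statement as a sublemma and then globalize via the UFD property.
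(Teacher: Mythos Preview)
The paper does not actually prove Theorem~\ref{theo:saitocrit}: Saito's criterion is stated there as a known result, with references to \cite{saito}, \cite{ziegler1986multiarrangements}, and \cite{terao1983free} (the last for the observation that the argument is characteristic-free). There is thus no ``paper's own proof'' to compare against; the theorem functions as background.

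That said, your outline is the standard proof and is essentially correct. One simplification: in $(3)\Rightarrow(1)$, once you know $\det M = c\,Q(\A,\bold{m})$ with $c\neq 0$, you do not need to localize at each $\ideal{\alpha_i}$ to check divisibility of the Cramer numerators $g_k$. The matrix $M_k$ obtained by replacing the $k$-th row of $M$ by $(\theta(x_1),\dots,\theta(x_l))$ has all rows coming from elements of $D(\A,\bold{m})$, so Remark~\ref{rem:detdivisoblebyeq} applies directly to $M_k$ and gives $Q(\A,\bold{m})\mid \det M_k$; hence $g_k=\det M_k/\det M\in S$. Your localization argument is really just reproving Remark~\ref{rem:detdivisoblebyeq} inside the main argument, which is harmless but redundant. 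Also, in $(1)\Rightarrow(2)$ you can get $\det M\neq 0$ more directly: a basis is $S$-linearly independent, and $S$-linear independence of the rows of $M$ over the domain $S$ forces $\det M\neq 0$ (pass to the fraction field and clear denominators). The rest of your degree/divisibility bookkeeping is fine and, as you note, uses nothing about the characteristic.
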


Directly from Saito's criterion, we obtain that if $(\A, \bold{m})$ is a free multiarrangement in $\KK^l$ with exponents $(e_1,\dots, e_l)$, then $\deg(Q(\A,\bold{m}))=|\bold{m}|=\sum_{i=1}^le_i$.

Notice that in contrast from the theory of simple arrangements, see \cite{orlterao}, the exponents of a free multiarrangement are not combinatorial in general, as shown in the following example.

\begin{Example}[\cite{ziegler1986multiarrangements}]\label{ex:specialexample} Consider the multiarrangement $(\A, \bold{m})$ in $\mathbb{R}^2$ with defining polynomial $Q(\A, \bold{m})=x^3y^3(x-y)(x+y)$. Then $(\A, \bold{m})$ is free with exponents $(3,5)$. However, if we consider the multiarrangement $(\A', \bold{m})$ in $\mathbb{R}^2$ with $Q(\A', \bold{m})=x^3y^3(x-y)(x+4y)$, then also $(\A', \bold{m})$ is free but with exponents $(4,4)$.
\end{Example}

Given a multiarrangement $(\A,\bold{m})$ in $\KK^l$ with $\A=\{H_1,\dots, H_n\}$, let $M(\A,\bold{m})$ be the $S$-submodule of the free $S$-module $S^n$ defined by
$$M(\A,\bold{m})=\langle(\alpha_1^{\bold{m}(H_1)},0,\dots,0), \dots, (0,\dots,0,\alpha_n^{\bold{m}(H_n)}) \rangle.$$
Consider the $n\times l$ matrix $A(\A)=(\frac{\partial\alpha_i}{\partial x_j})_{i,j}$ with coefficients in $S$. We can then define the multiplication map of $S$-modules 
\begin{equation}\label{eq:importantmapdef}
\varphi_{(\A,\bold{m})}\colon S^l\to S^n/M(\A,\bold{m})
\end{equation}
defined by $(g_1,\dots,g_l)^t\mapsto A(\A)(g_1,\dots,g_l)^t$.

With this construction, it is trivial to prove the following
\begin{Proposition}\label{prop:deraskernel} Let $(\A, \bold{m})$ be a multiarrangement in $\KK^l$. Then $$D(\A,\bold{m})\cong\Ker(\varphi_{(\A,\bold{m})}).$$
\end{Proposition}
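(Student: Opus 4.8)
The plan is to show that the claimed isomorphism is simply the canonical identification of $\Der_{\KK^l}$ with $S^l$, under which the defining condition of $D(\A,\bold{m})$ becomes, word for word, the condition of lying in $\Ker(\varphi_{(\A,\bold{m})})$.

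First I would fix the $S$-module isomorphism
$$\iota\colon \Der_{\KK^l}\to S^l,\qquad \delta=\sum_{j=1}^l g_j\partial_{x_j}\ \longmapsto\ (g_1,\dots,g_l)^t,$$
which is visibly bijective, $S$-linear and degree-preserving. Next I would record, for such a $\delta$ and each $i$, the elementary computation $\delta(\alpha_i)=\sum_{j=1}^l g_j\,\partial\alpha_i/\partial x_j$; this is exactly the $i$-th coordinate of the column vector $A(\A)(g_1,\dots,g_l)^t$. Thus $A(\A)\,\iota(\delta)\in S^n$ is precisely the tuple $(\delta(\alpha_1),\dots,\delta(\alpha_n))$, and $\varphi_{(\A,\bold{m})}(\iota(\delta))$ is its class modulo $M(\A,\bold{m})$.

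Then I would unwind $M(\A,\bold{m})$: since its generators $(\alpha_1^{\bold{m}(H_1)},0,\dots,0),\dots,(0,\dots,0,\alpha_n^{\bold{m}(H_n)})$ are supported on distinct coordinates, a vector $(h_1,\dots,h_n)\in S^n$ lies in $M(\A,\bold{m})$ if and only if $h_i\in\langle\alpha_i^{\bold{m}(H_i)}\rangle$ for every $i$. Combining the two steps, $\iota(\delta)\in\Ker(\varphi_{(\A,\bold{m})})$ if and only if $\delta(\alpha_i)\in\langle\alpha_i^{\bold{m}(H_i)}\rangle$ for all $i$, i.e. if and only if $\delta\in D(\A,\bold{m})$. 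Hence $\iota$ restricts to an isomorphism of graded $S$-modules $D(\A,\bold{m})\cong\Ker(\varphi_{(\A,\bold{m})})$.

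There is no genuine obstacle here, as the statement is a direct translation between two descriptions of the same submodule; the only point worth stating with care is that the ``diagonal'' form of the generators of $M(\A,\bold{m})$ allows membership to be tested one coordinate at a time, which is exactly what matches the condition $\delta(\alpha_i)\in\langle\alpha_i^{\bold{m}(H_i)}\rangle$ being imposed separately for each hyperplane.
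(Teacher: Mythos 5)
Your argument is correct and is exactly the intended one: the paper omits the proof entirely, remarking only that the proposition is trivial to prove from the construction, and your write-up supplies the straightforward details (the identification $\Der_{\KK^l}\cong S^l$, the observation that the $i$-th entry of $A(\A)(g_1,\dots,g_l)^t$ is $\delta(\alpha_i)$, and the coordinate-wise membership test for $M(\A,\bold{m})$). Nothing is missing.
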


\section{From characteristic $0$ to characteristic $p$}\label{sec:freefromchar0tocharp}

Similarly to \cite{palezzato2018free} and \cite{palezzato2019combinatorially}, from now on we will assume that $(\A,\bold{m})$ is a multiarrangement in $\mathbb{Q}^l$. After getting rid of the denominators, we can suppose that $\alpha_i \in\mathbb{Z}[x_1,\dots, x_l]$ for all $i=1,\dots, n$, and hence that $Q(\A,\bold{m})=\prod_{i=1}^n\alpha_i^{\bold{m}(H_i)}\in\mathbb{Z}[x_1,\dots, x_l]$. Moreover, we can also assume that there exists no prime number $p$ that divides any $\alpha_i$.

Let $p$ be a prime number. Consider the image of each $\alpha_i$ under the canonical homomorphism $\pi_p\colon \mathbb{Z}[x_1,\dots, x_l]\to \mathbb{F}_p[x_1,\dots, x_l]$. By assumption, $\pi_p(\alpha_i)\ne0$ for all $i=1,\dots, n$.

\begin{Definition} Let $(\A,\bold{m})$ be a multiarrangement in $\mathbb{Q}^l$. We will call a prime number $p$ \textbf{good} for $(\A, \bold{m})$ if $\pi_p(\alpha_i)\ne\pi_p(\alpha_j)$ for all $i\ne j$.
\end{Definition}

Similarly to the case of simple arrangements, described in \cite{palezzato2018free}, we have the following.

\begin{Lemma}\label{lem:finitenotgoodprimes} There is a finite number of primes $p$ that are not good for $(\A,\bold{m})$.
\end{Lemma}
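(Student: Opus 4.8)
The plan is to show that the "bad" primes are exactly those dividing a certain nonzero integer, hence finite in number. First I would observe that for each pair $i \neq j$, the condition $\pi_p(\alpha_i) = \pi_p(\alpha_j)$ means that $p$ divides every coefficient of the polynomial $\alpha_i - \alpha_j \in \mathbb{Z}[x_1,\dots,x_l]$. Since $\alpha_i$ and $\alpha_j$ are linear forms defining distinct hyperplanes in $\mathbb{Q}^l$, and since (by our normalization) no prime divides all the coefficients of any $\alpha_k$, the difference $\alpha_i - \alpha_j$ is itself a nonzero polynomial in $\mathbb{Z}[x_1,\dots,x_l]$: indeed if $\alpha_i = \alpha_j$ in $\mathbb{Z}[x_1,\dots,x_l]$ then $H_i = H_j$, contradicting that $\A$ is an arrangement of distinct hyperplanes.

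Next I would let $c_{ij}$ denote the greatest common divisor of the (finitely many) coefficients of $\alpha_i - \alpha_j$; this is a well-defined nonzero integer precisely because $\alpha_i - \alpha_j \neq 0$. Then $\pi_p(\alpha_i) = \pi_p(\alpha_j)$ holds if and only if $p \mid c_{ij}$. Consequently, a prime $p$ fails to be good for $(\A,\bold{m})$ if and only if $p$ divides the nonzero integer $N := \prod_{i < j} c_{ij}$. Since a nonzero integer has only finitely many prime divisors, there are only finitely many primes that are not good for $(\A,\bold{m})$, which is the claim.

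The argument is essentially routine; the only point requiring a little care is the reduction to showing $\alpha_i - \alpha_j \neq 0$ as an integral polynomial, which I expect to be the main (though minor) obstacle. One must use both that the hyperplanes $H_i$ are pairwise distinct and that the defining forms have been normalized so that no prime divides all coefficients of a given $\alpha_k$ — without the latter, $\alpha_i$ and $\alpha_j$ could a priori be distinct scalar multiples of the same primitive form, and then the naive difference might be misleading; but in fact even in that case $\alpha_i - \alpha_j$ is a nonzero linear form, so the conclusion still holds. I would also remark that this mirrors exactly the corresponding statement for simple arrangements in \cite{palezzato2018free}, the multiplicity $\bold{m}$ playing no role since goodness depends only on the underlying arrangement $\A$.
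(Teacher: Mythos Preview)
Your argument is correct and follows the same idea as the paper: a prime is bad exactly when $\pi_p(\alpha_i)=\pi_p(\alpha_j)$ for some $i\neq j$, and this can occur for only finitely many $p$. The paper's proof stops at that observation without justification, whereas you spell out the reason via the content $c_{ij}$ of $\alpha_i-\alpha_j$ and the product $N=\prod_{i<j}c_{ij}$; this is a harmless and arguably welcome elaboration. Your closing remarks about the normalization hypothesis are slightly overcautious (as you yourself note, $\alpha_i-\alpha_j\neq 0$ already follows from $H_i\neq H_j$, so the primitivity assumption is not actually needed for this lemma), but nothing is wrong.
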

\begin{proof} By definition, $p$ is not good for $(\A,\bold{m})$ if and only if $\pi_p(\alpha_i)=\pi_p(\alpha_j)$ for some $i\ne j$ and this can happens only for a finite number of primes.
\end{proof}

Let now $p$ be a good prime for $(\A, \bold{m})$, and consider $(\A_p, \bold{m})$ the multiarrangement in $\mathbb{F}_p^l$ defined by $\pi_p(Q(\A, \bold{m}))$. By construction, $Q(\A_p, \bold{m})=\pi_p(Q(\A,\bold{m}))=\prod_{i=1}^n\pi_p(\alpha_i)^{\bold{m}(H_i)}\ne0$ and it has degree $|\bold{m}|$.

\begin{Theorem}\label{theo:fromchar0tocharP} If $(\A,\bold{m})$ is a free multiarrangement in $\mathbb{Q}^l$ with exponents $(e_1,\dots, e_l)$, then $(\A_p,\bold{m})$ is a free multiarrangement in $\mathbb{F}_p^l$ with exponents $(e_1,\dots, e_l)$, 
for all good primes except possibly a finite number of them.
\end{Theorem}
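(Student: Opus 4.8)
The plan is to reduce everything to a statement about when a free basis over $\QQ$ remains a basis after reduction mod $p$, using Saito's criterion as the bridge. Suppose $(\A,\bold{m})$ is free over $\QQ$ with exponents $(e_1,\dots,e_l)$, and pick a basis $\delta_1,\dots,\delta_l$ of $D(\A,\bold{m})$. After clearing denominators we may assume each $\delta_j = \sum_i f_{ij}\partial_{x_i}$ has coefficients $f_{ij}\in\Z[x_1,\dots,x_l]$, homogeneous of degree $e_j$, and moreover (dividing out by the content) that the coefficients of $\delta_j$ have no common prime factor. By Saito's criterion (Theorem~\ref{theo:saitocrit}), $\det(\delta_i(x_j))_{i,j} = c\, Q(\A,\bold{m})$ for some $c\in\QQ\setminus\{0\}$; again clearing denominators and adjusting the $\delta_j$ we may take $c\in\Z\setminus\{0\}$, with $Q(\A,\bold{m})\in\Z[x_1,\dots,x_l]$ as already arranged in the setup of this section.

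The key steps would then be as follows. First, for any good prime $p$, the reductions $\pi_p(\delta_j) := \sum_i \pi_p(f_{ij})\partial_{x_i}$ lie in $D(\A_p,\bold{m})$: indeed $\delta_j(\alpha_i)\in\ideal{\alpha_i^{\bold{m}(H_i)}}$ in $\Z[x_1,\dots,x_l]$ (this integrality is automatic once denominators are cleared, since the membership $\delta_j(\alpha_i)=\alpha_i^{\bold{m}(H_i)}g$ forces $g$ to be a polynomial with coefficients in $\Z$ after clearing, because $\alpha_i$ is primitive — by Gauss's lemma), and applying the ring homomorphism $\pi_p$ gives $\pi_p(\delta_j)(\pi_p(\alpha_i))\in\ideal{\pi_p(\alpha_i)^{\bold{m}(H_i)}}$. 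Second, the matrix identity reduces: $\det(\pi_p(\delta_i)(x_j))_{i,j} = \pi_p(c)\,Q(\A_p,\bold{m})$. Third, and this is the crux, we want $\pi_p(c)\ne 0$, i.e.\ $p\nmid c$; since $c$ is a fixed nonzero integer this excludes only finitely many primes. When $p\nmid c$, condition (2) of Saito's criterion holds over $\F_p$, so $(\A_p,\bold{m})$ is free with basis $\pi_p(\delta_1),\dots,\pi_p(\delta_l)$, and the polynomial degrees are unchanged because the $\delta_j$ were chosen primitive — reduction mod $p$ does not kill the leading content, so $\pdeg(\pi_p(\delta_j))=e_j$. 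Combining with Lemma~\ref{lem:finitenotgoodprimes}, the conclusion holds for all good primes outside a finite set.

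The main obstacle I expect is the bookkeeping around \emph{content and primitivity}: one must be careful that after clearing denominators to get the $\delta_j$ integral, their reductions mod $p$ are still nonzero of the correct degree $e_j$ (not collapsing to a lower-degree or zero derivation), and simultaneously that the constant $c$ in Saito's determinant is a well-defined nonzero integer rather than merely rational. The clean way to handle both at once is to first scale each $\delta_j$ so that its coefficient vector $(f_{1j},\dots,f_{lj})\in\Z[x_1,\dots,x_l]^l$ is primitive (no common prime divides all coefficients); then automatically $\pi_p(\delta_j)\ne 0$ and, being homogeneous of degree $e_j$, it has $\pdeg = e_j$. With that normalization the determinant $\det(\delta_i(x_j))$ is a specific integer polynomial equal to $c\,Q(\A,\bold{m})$ with $Q(\A,\bold{m})$ primitive (guaranteed by the running assumption that no prime divides any $\alpha_i$), forcing $c\in\Z\setminus\{0\}$ by comparing contents. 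The finitely many bad primes are then exactly the prime divisors of $c$, together with the finitely many non-good primes from Lemma~\ref{lem:finitenotgoodprimes}.

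Finally, a small remark worth including: one should note that $\pi_p(\delta_1),\dots,\pi_p(\delta_l)$ automatically span $D(\A_p,\bold{m})$ and are not merely contained in it — but this is exactly what condition (2) of Saito's criterion delivers once $\det(\pi_p(\delta_i)(x_j))_{i,j}$ is a nonzero scalar multiple of $Q(\A_p,\bold{m})$, so no extra argument is needed beyond verifying $p\nmid c$.
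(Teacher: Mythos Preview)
Your proposal is correct and follows essentially the same route as the paper's own proof: choose an integral basis, verify via $\pi_p$ that the reductions lie in $D(\A_p,\bold{m})$, and then apply Saito's criterion over $\F_p$ after checking that the finitely many primes dividing the integer constant $c$ in $\det(\delta_i(x_j))=c\,Q(\A,\bold{m})$ are excluded. Your explicit handling of primitivity and Gauss's lemma to force $c\in\Z\setminus\{0\}$ and to guarantee $\pdeg(\pi_p(\delta_j))=e_j$ is slightly more careful than the paper's ``we can assume'' phrasing, but the argument is the same.
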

\begin{proof} Let $\Delta=\{\delta_1,\dots, \delta_l\}$ be a basis of $D(\A,\bold{m})$ with $\pdeg(\delta_i)=e_i$ for all $i=1,\dots, l$. Since $Q(\A,\bold{m})$ has only integer coefficients, 
we can assume that every polynomial that appears in each $\delta\in\Delta$ is in $\mathbb{Z}[x_1,\dots, x_l]$. Hence we can consider $\bar{\delta}_1,\dots, \bar{\delta}_l\in \Der_{\mathbb{F}_p^l}$ 
 the image of $\delta_1,\dots, \delta_l$. 
We can assume that $p\nmid\delta$ for all $\delta\in\Delta$, and hence $\bar{\delta}\ne0$ for all $\delta\in\Delta$. This implies that $\pdeg(\bar{\delta}_i)=\pdeg(\delta_i)=e_i$ for all $i=1,\dots, l$.

Fix one $j=1,\dots, n$. Then by the definition of $D(\A,\bold{m})$, for each $\delta\in \Delta$ there exists $h_j\in\mathbb{Z}[x_1,\dots, x_l]$ such that $\delta(\alpha_j)=h_j\alpha_j^{\bold{m}(H_j)}$. If we apply $\pi_p$ to this expression we obtain that $\bar{\delta}(\pi_p(\alpha_j))=\pi_p(\delta(\alpha_j))=\pi_p(h_j\alpha_j^{\bold{m}(H_j)})=\pi_p(h_j)\pi_p(\alpha_j)^{\bold{m}(H_j)}$. Since this holds for all $\alpha_j$, then $\bar{\delta}\in D(\A_p,\bold{m})$.

By Theorem~\ref{theo:saitocrit}, since in each $\delta\in\Delta$  every polynomial that appears have only integer coefficients, there exists $c\in\mathbb{Z}\setminus\{0\}$ such that $\det(\delta_i(x_j))_{i,j}=c Q(\A, \bold{m})$. 
If we apply $\pi_p$ to the previous equality we get that $\det(\bar{\delta}_i(x_j))_{i,j}=\pi_p(\det(\delta_i(x_j))_{i,j})=\pi_p(c Q(\A,\bold{m}))=\pi_p(c)Q(\A_p,\bold{m})$. Hence if $p$ does not divide $c$, 
we have that $\pi_p(c)\in\mathbb{F}_p\setminus\{0\}$ and hence again by Theorem \ref{theo:saitocrit}, we have that $\bar{\delta}_1,\dots, \bar{\delta}_l$ are a basis of $D(\A_p,\bold{m})$. 
This proves that $(\A_p,\bold{m})$ is free with exponents $(e_1,\dots, e_l)$.
\end{proof}

By Lemma \ref{lem:finitenotgoodprimes}, the number of non-good primes is finite. Hence we have the following.
\begin{Corollary}\label{corol:fromchar0tocharP} Let $(\A,\bold{m})$ be a multiarrangement in $\mathbb{Q}^l$ and $p$ a large prime number. If $(\A,\bold{m})$ is free in $\mathbb{Q}^l$ with exponents $(e_1,\dots, e_l)$, then $(\A_p,\bold{m})$ is free in $\mathbb{F}_p^l$ with exponents $(e_1,\dots, e_l)$.
\end{Corollary}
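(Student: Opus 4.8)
The final statement is Corollary~\ref{corol:fromchar0tocharP}, which is an essentially immediate consequence of the two results preceding it, so my plan is to combine them cleanly rather than to introduce anything genuinely new.

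The plan is to note that Theorem~\ref{theo:fromchar0tocharP} already does the real work: under the hypothesis that $(\A,\bold{m})$ is free over $\QQ$ with exponents $(e_1,\dots,e_l)$, it guarantees that $(\A_p,\bold{m})$ is free over $\mathbb{F}_p$ with the same exponents for every good prime $p$ outside a finite set. Call that finite exceptional set $E_1$. The only remaining gap is that Theorem~\ref{theo:fromchar0tocharP} is phrased in terms of \emph{good} primes, whereas the Corollary is phrased for \emph{large} primes with no mention of goodness. So the first step is to invoke Lemma~\ref{lem:finitenotgoodprimes}, which says the set $E_2$ of primes that are \emph{not} good for $(\A,\bold{m})$ is finite. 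Then $E := E_1 \cup E_2$ is finite, and for every prime $p \notin E$ the prime $p$ is both good and outside the exceptional set of Theorem~\ref{theo:fromchar0tocharP}, hence $(\A_p,\bold{m})$ is defined and free over $\mathbb{F}_p$ with exponents $(e_1,\dots,e_l)$.

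Concretely I would write: since $E$ is finite, there is a bound $N$ with $p \notin E$ for all $p > N$; interpreting ``$p$ a large prime'' as $p > N$, the conclusion follows directly from Theorem~\ref{theo:fromchar0tocharP}. One small point worth making explicit, so the statement of the Corollary parses, is that for $p \notin E_2$ the multiarrangement $(\A_p,\bold{m})$ is well-defined in the sense set up just before Theorem~\ref{theo:fromchar0tocharP} (the $\pi_p(\alpha_i)$ are nonzero by the standing assumption that no prime divides any $\alpha_i$, and they are pairwise distinct by goodness, so $Q(\A_p,\bold{m})$ has degree $|\bold{m}|$ as claimed there).

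There is no real obstacle here: this is a bookkeeping corollary that merely packages Theorem~\ref{theo:fromchar0tocharP} and Lemma~\ref{lem:finitenotgoodprimes} together. If anything, the only thing to be careful about is making sure the quantifier structure is right --- ``all but finitely many good primes'' together with ``all but finitely many primes are good'' yields ``all but finitely many primes'', i.e. ``all sufficiently large primes'' --- and this is exactly the two-line argument above.
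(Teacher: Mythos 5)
Your argument is correct and is exactly the paper's: the author also derives the Corollary by combining Theorem~\ref{theo:fromchar0tocharP} with Lemma~\ref{lem:finitenotgoodprimes}, noting that the finitely many non-good primes together with the finitely many exceptional good primes are all excluded once $p$ is sufficiently large. Your extra remark that $(\A_p,\bold{m})$ is well-defined for good primes is a harmless elaboration of what the paper sets up just before Theorem~\ref{theo:fromchar0tocharP}.
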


If we consider the second multiarrangement of Example \ref{ex:specialexample} over a different field, we have the following.
\begin{Example} Consider the multiarrangement $(\A',\bold{m})$ of Example \ref{ex:specialexample} with $\KK=\mathbb{Q}$. All prime numbers $p\ne2$ are good for $(\A',\bold{m})$. A direct computation shows that
$(\A',\bold{m})$ is free with exponents $(4,4)$. However, if we consider $((\A')_3,\bold{m})$ as a multiarrangement in $\mathbb{F}_3^2$, then it is still free but its exponents are $(3,5)$.
This is because if we can consider the matrix
$$\left(\begin{array}{cc} x^4 & 13xy^3-12y^4\\ x^3y & -3xy^3 +4y^4 \end{array}\right) $$
as the coefficient matrix of a basis of $D(\A',\bold{m})$, its determinant is equal to
$-3Q(\A',\bold{m})$. A direct computation shows that if we take another basis of $D(\A',\bold{m})$ with only integer coefficients, then the determinant of the coefficient matrix is equal to $cQ(\A',\bold{m})$ with $c\in 3\mathbb{Z}\setminus\{0\}$.
This is why over $\mathbb{F}_3$ the exponents of $(\A',\bold{m})$ change.
\end{Example}


 The following is an example of a free multiarrangements in $\mathbb{Q}^l$ that is not free in $\mathbb{F}_p^l$, for some good prime $p$.
 \begin{Example} Consider the multiarrangement $(\A,\bold{m})$ in $\mathbb{Q}^3$ with defining polynomial $Q(\A,\bold{m})=x^2y^2z^2(x-y)^2(x-z)^2(y-z)^2$. In this situation, all prime numbers are good for $(\A,\bold{m})$, and
 $(\A,\bold{m})$ is free with exponents $(4,4,4)$. However, if we consider $(\A_2,\bold{m})$ as a multiarrangement in $\mathbb{F}_2^3$, then $(\A_2,\bold{m})$ is free with exponents $(2,4,6)$. On the other hand, if we consider $(\A_3,\bold{m})$ as a multiarrangement in $\mathbb{F}_3^3$, then $(\A_3,\bold{m})$ is not free.
 This is because the determinant of a coefficient matrix of a basis of $D(\A,\bold{m})$ is equal to $cQ(\A,\bold{m})$ with $c\in 18\mathbb{Z}\setminus\{0\}$. In particular, if we consider $p\ne2,3$ a prime number and $(\A_p,\bold{m})$ as a multiarrangement in $\mathbb{F}_p^3$, then $(\A_p,\bold{m})$ is free with exponents $(4,4,4)$.
 \end{Example}

\section{From characteristic $p$ to characteristic $0$}\label{sec:fromPtozerochar}
As in Section \ref{sec:freefromchar0tocharp}, we will assume that $(\A,\bold{m})$ is a multiarrangement in $\mathbb{Q}^l$, that $\alpha_i \in\mathbb{Z}[x_1,\dots, x_l]$ for all $i=1,\dots, n$, and that there exists no prime number $p$ that divides any $\alpha_i$. Moreover, let $S=\mathbb{Q}[x_1,\dots, x_l]$, $S_\mathbb{Z}=\mathbb{Z}[x_1,\dots, x_l]$ and $S_p=\mathbb{F}_p[x_1,\dots, x_l]$. 

Similarly to the construction of the map~\eqref{eq:importantmapdef} at the end of Section \ref{sec:arr}, by our assumptions on the $\alpha_i$, we can consider $M(\A,\bold{m})$ as $S_\mathbb{Z}$-submodule of $S_\mathbb{Z}^n$, and $A(\A)$ as matrix with coefficients in $S_\mathbb{Z}$. Hence we can construct the map of  $S_\mathbb{Z}$-modules
\begin{equation}\label{eq:importantmapdefZ}
\varphi_{\mathbb{Z}}\colon S_\mathbb{Z}^l\to S_\mathbb{Z}^n/M(\A,\bold{m})
\end{equation}
defined by $(g_1,\dots,g_l)^t\mapsto A(\A)(g_1,\dots,g_l)^t$. It is trivial to see that
\begin{Lemma}\label{lemma:DerfromZtoQ} Let $(\A, \bold{m})$ be a multiarrangement in $\mathbb{Q}^l$. Then $$D(\A,\bold{m})\cong\Ker(\varphi_\mathbb{Z})\otimes_{\mathbb{Z}}\mathbb{Q}.$$
\end{Lemma}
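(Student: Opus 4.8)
The plan is to compare the two presentation maps $\varphi_{(\A,\bold{m})}$ and $\varphi_{\mathbb{Z}}$ via the base-change functor $-\otimes_{\mathbb{Z}}\mathbb{Q}$, which is exact because $\mathbb{Q}$ is flat over $\mathbb{Z}$. First I would record that $S = S_{\mathbb{Z}}\otimes_{\mathbb{Z}}\mathbb{Q}$, that $S^l = S_{\mathbb{Z}}^l\otimes_{\mathbb{Z}}\mathbb{Q}$ and $S^n = S_{\mathbb{Z}}^n\otimes_{\mathbb{Z}}\mathbb{Q}$, and — using that every $\alpha_i$ lies in $S_{\mathbb{Z}}$ — that $M(\A,\bold{m})\otimes_{\mathbb{Z}}\mathbb{Q}$ is exactly the submodule of $S^n$ generated by the vectors $(0,\dots,\alpha_i^{\bold{m}(H_i)},\dots,0)$, i.e. the ``$\mathbb{Q}$-version'' of $M(\A,\bold{m})$. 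Here I want to be a little careful: $M(\A,\bold{m})$ is a direct summand of $S_{\mathbb{Z}}^n$ as an $S_{\mathbb{Z}}$-module (it is the internal direct sum of the free rank-one submodules $\alpha_i^{\bold{m}(H_i)}S_{\mathbb{Z}}$, each of which is free since $\alpha_i^{\bold{m}(H_i)}$ is a nonzerodivisor), so $S_{\mathbb{Z}}^n/M(\A,\bold{m})$ is a reasonably behaved module and tensoring the short exact sequence $0\to M(\A,\bold{m})\to S_{\mathbb{Z}}^n\to S_{\mathbb{Z}}^n/M(\A,\bold{m})\to 0$ with $\mathbb{Q}$ gives $0\to M(\A,\bold{m})\otimes\mathbb{Q}\to S^n\to (S_{\mathbb{Z}}^n/M(\A,\bold{m}))\otimes\mathbb{Q}\to 0$. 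This identifies $(S_{\mathbb{Z}}^n/M(\A,\bold{m}))\otimes_{\mathbb{Z}}\mathbb{Q}$ with $S^n/(M(\A,\bold{m})\otimes\mathbb{Q})$, which in turn is the target $S^n/M(\A,\bold{m})$ of $\varphi_{(\A,\bold{m})}$ over $S$.

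Next I would observe that the matrix $A(\A) = (\partial\alpha_i/\partial x_j)_{i,j}$ has the same entries whether viewed over $S_{\mathbb{Z}}$ or over $S$, so $\varphi_{(\A,\bold{m})} = \varphi_{\mathbb{Z}}\otimes_{\mathbb{Z}}\mathbb{Q}$ under the identifications just made. Applying the flat functor $-\otimes_{\mathbb{Z}}\mathbb{Q}$ to the exact sequence $0\to \Ker(\varphi_{\mathbb{Z}})\to S_{\mathbb{Z}}^l\xrightarrow{\varphi_{\mathbb{Z}}} S_{\mathbb{Z}}^n/M(\A,\bold{m})$ yields, by exactness, $\Ker(\varphi_{\mathbb{Z}})\otimes_{\mathbb{Z}}\mathbb{Q}\cong\Ker(\varphi_{\mathbb{Z}}\otimes\mathbb{Q}) = \Ker(\varphi_{(\A,\bold{m})})$. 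Combining this with Proposition~\ref{prop:deraskernel}, which gives $D(\A,\bold{m})\cong\Ker(\varphi_{(\A,\bold{m})})$, we conclude $D(\A,\bold{m})\cong\Ker(\varphi_{\mathbb{Z}})\otimes_{\mathbb{Z}}\mathbb{Q}$, as claimed.

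The only genuinely non-formal point — the step I would flag as the main thing to get right rather than the main obstacle — is the commutation of $-\otimes_{\mathbb{Z}}\mathbb{Q}$ with forming the quotient $S_{\mathbb{Z}}^n/M(\A,\bold{m})$, i.e. the assertion that no $\mathbb{Z}$-torsion is created or destroyed when passing to the quotient. This is where flatness of $\mathbb{Q}$ over $\mathbb{Z}$ (equivalently, $\mathbb{Q}$ being the localization $\mathbb{Z}[(\mathbb{Z}\setminus\{0\})^{-1}]$, a torsion-free divisible abelian group) does all the work, and it also uses our standing assumption that the $\alpha_i$ have integer coefficients with no common prime factor so that $M(\A,\bold{m})$ really is an $S_{\mathbb{Z}}$-submodule of $S_{\mathbb{Z}}^n$ of the expected shape. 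Everything else — that the presentation matrices agree, that kernels commute with flat base change — is routine, which is why the statement is labelled ``trivial to see''. I would write the argument in two or three sentences, citing flatness of $\mathbb{Q}$ over $\mathbb{Z}$ and Proposition~\ref{prop:deraskernel}.
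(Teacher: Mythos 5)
Your argument is correct and complete. The paper itself offers no proof of this lemma (it is introduced with ``It is trivial to see that''), and the flat-base-change argument you give --- identify $\varphi_{(\A,\bold{m})}$ with $\varphi_{\mathbb{Z}}\otimes_{\mathbb{Z}}\mathbb{Q}$, use flatness of $\mathbb{Q}$ over $\mathbb{Z}$ to commute $-\otimes_{\mathbb{Z}}\mathbb{Q}$ with kernels, and invoke Proposition~\ref{prop:deraskernel} --- is surely the intended one.

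One parenthetical claim is false, though harmless: $M(\A,\bold{m})$ is \emph{not} a direct summand of $S_{\mathbb{Z}}^n$ as an $S_{\mathbb{Z}}$-module. Already $\alpha_1^{\bold{m}(H_1)}S_{\mathbb{Z}}$ is not a direct summand of $S_{\mathbb{Z}}$: a complement would be isomorphic to $S_{\mathbb{Z}}/\alpha_1^{\bold{m}(H_1)}S_{\mathbb{Z}}$, a nonzero module annihilated by $\alpha_1^{\bold{m}(H_1)}$, which cannot embed in the domain $S_{\mathbb{Z}}$. Being an internal direct sum of free rank-one submodules does not make $M(\A,\bold{m})$ a summand of the ambient free module. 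You do not need this claim: right-exactness of the tensor product already identifies $\bigl(S_{\mathbb{Z}}^n/M(\A,\bold{m})\bigr)\otimes_{\mathbb{Z}}\mathbb{Q}$ with $S^n/M(\A,\bold{m})$, and flatness of $\mathbb{Q}$ over $\mathbb{Z}$ alone gives the exactness of the tensored sequence and the commutation with kernels. (Likewise, the hypothesis that no prime divides any $\alpha_i$ is not actually used here --- it matters for the reduction mod $p$, not for passing from $\mathbb{Z}$ to $\mathbb{Q}$.) With that parenthesis deleted, the proof stands as written.
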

For any integer $k\ge1$, the canonical homomorphism $\pi_p\colon S_\mathbb{Z}\to S_p$ extends naturally to the homomorphism $\pi_p^k\colon S_\mathbb{Z}^k\to S^k_p$. If we assume that $p$ is a good prime for $(\A, \bold{m})$, then $\pi_p^n(M(\A,\bold{m}))=M(\A_p,\bold{m})$. This implies that we can construct the following commutative diagram

\begin{equation}
\xymatrix{\Ker(\varphi_\mathbb{Z}) \ar[d]^{\pi_p^l}  \ar@{^{(}->}[r]^-i &S_\mathbb{Z}^l \ar[d]^{\pi_p^l} \ar[r]^-{\varphi_{\mathbb{Z}}}& S_\mathbb{Z}^n/M(\A,\bold{m}) \ar[d]^{\pi_p^n}\\
D(\A_p,\bold{m}) \ar@{^{(}->}[r]^-{i_p} &S_p^l \ar[r]^-{\varphi_{(\A_p,\bold{m})}}& S_p^n/M(\A_p,\bold{m}) } \label{diagmapsZFp} \end{equation}

The introduction of the commutative diagram~\eqref{diagmapsZFp} allows us to describe in which situations the freeness of a multiarrangement over $\mathbb{F}_p$ implies the freeness over $\mathbb{Q}$.
\begin{Theorem}\label{theo:fromPtozero} Let $(\A,\bold{m})$ be a multiarrangement in $\mathbb{Q}^l$. Let $p$
be a good prime number for $(\A,\bold{m})$ and assume that the map 
$$\pi_p^l\colon \Ker(\varphi_\mathbb{Z})\to D(\A_p,\bold{m})$$ 
is surjective. If $(\A_p,\bold{m})$ is free in $\mathbb{F}_p^l$ with exponents $(e_1,\dots, e_l)$, 
then $(\A,\bold{m})$ is free in $\mathbb{Q}^l$ with exponents $(e_1,\dots, e_l)$.
\end{Theorem}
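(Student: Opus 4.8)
The plan is to manufacture an explicit candidate basis of $D(\A,\bold{m})$ with integer coefficients by lifting a basis of $D(\A_p,\bold{m})$ through the assumed surjection $\pi_p^l$, and then to verify Saito's criterion (Theorem~\ref{theo:saitocrit}) over $\mathbb{Q}$ by reducing the relevant determinant modulo $p$.

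First I would fix a homogeneous basis $\bar\delta_1,\dots,\bar\delta_l$ of $D(\A_p,\bold{m})$ with $\pdeg(\bar\delta_i)=e_i$. Since all the modules involved are graded and $\pi_p^l$ is a degree-preserving homomorphism of graded modules, the assumed surjectivity of $\pi_p^l$ amounts to surjectivity in each degree; hence every $\bar\delta_i$ lifts to a homogeneous element $\delta_i\in\Ker(\varphi_\mathbb{Z})$ with $\pdeg(\delta_i)=e_i$. Because $\Ker(\varphi_\mathbb{Z})\subseteq D(\A,\bold{m})$ (an element of $\Ker(\varphi_\mathbb{Z})$ is a derivation with coefficients in $S_\mathbb{Z}$ satisfying the logarithmic conditions over $S$ as well, by the description in Proposition~\ref{prop:deraskernel} and Lemma~\ref{lemma:DerfromZtoQ}), we obtain $\delta_1,\dots,\delta_l\in D(\A,\bold{m})$ with $\sum_i\pdeg(\delta_i)=\sum_i e_i$.

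Next I would record that $\sum_i e_i=|\bold{m}|$: this follows from the observation after Theorem~\ref{theo:saitocrit} applied to the free multiarrangement $(\A_p,\bold{m})$, together with $\deg Q(\A_p,\bold{m})=|\bold{m}|$ by construction. Saito's criterion applied to the basis $\bar\delta_1,\dots,\bar\delta_l$ also gives $\det(\bar\delta_i(x_j))_{i,j}=c'\,Q(\A_p,\bold{m})$ for some $c'\in\mathbb{F}_p\setminus\{0\}$, so in particular $\det(\bar\delta_i(x_j))_{i,j}\neq0$ in $S_p$. Now the matrix $(\delta_i(x_j))_{i,j}$ has entries in $S_\mathbb{Z}$ and reduces modulo $p$ to $(\bar\delta_i(x_j))_{i,j}$, so $\pi_p\big(\det(\delta_i(x_j))_{i,j}\big)=\det(\bar\delta_i(x_j))_{i,j}\neq0$, which forces $\det(\delta_i(x_j))_{i,j}\neq0$ in $S$. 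Hence $\delta_1,\dots,\delta_l$ are linearly independent over $S$.

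Finally, $\delta_1,\dots,\delta_l\in D(\A,\bold{m})$ are linearly independent over $S$ and satisfy $\sum_i\pdeg(\delta_i)=|\bold{m}|$, so condition~(3) of Theorem~\ref{theo:saitocrit} holds and $D(\A,\bold{m})=S\delta_1\oplus\cdots\oplus S\delta_l$ is free with exponents $(e_1,\dots,e_l)$. I expect the only delicate point to be the grading bookkeeping needed to lift the $\bar\delta_i$ to elements $\delta_i$ of the correct polynomial degree; the surjectivity hypothesis does the real work of producing an integral candidate basis, while the freeness of $(\A_p,\bold{m})$ is exactly what guarantees that the reduced determinant, and therefore the determinant over $\mathbb{Q}$, is nonzero.
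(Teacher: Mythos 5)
Your proposal is correct and follows essentially the same route as the paper: lift a homogeneous basis of $D(\A_p,\bold{m})$ through the assumed surjection $\pi_p^l$, observe that the coefficient determinant of the lifts reduces modulo $p$ to a nonzero multiple of $Q(\A_p,\bold{m})$ and hence is nonzero over $\mathbb{Q}$, and conclude by Saito's criterion. The only cosmetic difference is that you verify condition (3) of Theorem~\ref{theo:saitocrit} (linear independence plus $\sum_i e_i=|\bold{m}|$), whereas the paper uses Remark~\ref{rem:detdivisoblebyeq} and the same degree count to land on condition (2); the two are interchangeable here.
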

\begin{proof} Let $\delta_1,\dots,\delta_l$ be a basis of $D(\A_p,\bold{m})$ such that $\pdeg(\delta_i)=e_i$. Since the map $\pi_p^l\colon \Ker(\varphi_\mathbb{Z})\to D(\A_p,\bold{m})$ is surjective, there exist $\tilde{\delta}_1,\dots,\tilde{\delta}_l$ in $\Ker(\varphi_\mathbb{Z})\setminus\{0\}$ such that $\pi_p^l(\tilde{\delta}_i)=\delta_i$. We can assume that each $\tilde{\delta}_i$ is homogeneous. Clearly $\pdeg(\tilde{\delta}_i)=e_i$.

By Lemma~\ref{lemma:DerfromZtoQ}, we can consider $\tilde{\delta}_1,\dots,\tilde{\delta}_l$ as elements of $D(\A,\bold{m})$. By Remark~\ref{rem:detdivisoblebyeq}, we have that $\det(\tilde{\delta}_i(x_j)_{i,j})=hQ(\A,\bold{m})$ for some $h\in S_\mathbb{Z}\subseteq S$. On the other hand,
$\deg(\det(\tilde{\delta}_i(x_j)_{i,j}))=\sum_{k=1}^l\pdeg(\tilde{\delta}_k)=\sum_{k=1}^le_k=|\bold{m}|=\deg(Q(\A,\bold{m}))$. This implies that $h\in\mathbb{Z}\subseteq\mathbb{Q}$. Suppose that $h=0$, then $\det(\tilde{\delta}_i(x_j)_{i,j})=0$. If we apply the map $\pi_p\colon S_\mathbb{Z}\to S_p$ to this equality we obtain
$0=\pi_p(\det(\tilde{\delta}_i(x_j)_{i,j}))=\det(\delta_i(x_j)_{i,j})$. However this is impossible, since, by Theorem~\ref{theo:saitocrit}, $\det(\delta_i(x_j)_{i,j})=cQ(\A_p,\bold{m})$ for some $c\in\mathbb{F}_p\setminus\{0\}$. This implies that $\det(\tilde{\delta}_i(x_j)_{i,j})=hQ(\A,\bold{m})$ for some $h\in\mathbb{Q}\setminus\{0\}$. If we apply Theorem~\ref{theo:saitocrit} to $\tilde{\delta}_1,\dots,\tilde{\delta}_l$, we obtain that they form a basis of $D(\A,\bold{m})$, and hence that $\A$ is free with exponents $(e_1,\dots, e_l)$.
\end{proof}

In general, it might not be easy to check directly the surjectivity of the map $\pi_p^l\colon \Ker(\varphi_\mathbb{Z})\to D(\A_p,\bold{m})$. However, we can obtain information by looking at $\coker(\varphi_\mathbb{Z})$.

\begin{Proposition}\label{prop:zerodivtosurg} Let $(\A, \bold{m})$ be a multiarrangement in $\mathbb{Q}^l$ and $p$ a good prime for $(\A, \bold{m})$. If $p$ is not a zero divisor of $\coker(\varphi_\mathbb{Z})$, then the map $\pi_p^l\colon \Ker(\varphi_\mathbb{Z})\to D(\A_p,\bold{m})$ is surjective.
\end{Proposition}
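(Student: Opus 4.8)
The plan is to chase the commutative diagram~\eqref{diagmapsZFp} together with the snake lemma. Write $C = \coker(\varphi_{\mathbb{Z}}) = S_\mathbb{Z}^n/(M(\A,\bold{m}) + \image \varphi_{\mathbb{Z}})$, and consider the short exact sequence of $S_\mathbb{Z}$-modules
\begin{equation}\label{eq:sesimage}
0 \to \Ker(\varphi_\mathbb{Z}) \to S_\mathbb{Z}^l \to \image(\varphi_\mathbb{Z}) \to 0,
\end{equation}
where I regard $\image(\varphi_\mathbb{Z})$ as a submodule of $S_\mathbb{Z}^n/M(\A,\bold{m})$. First I would tensor~\eqref{eq:sesimage} with $\mathbb{F}_p$ over $\mathbb{Z}$ and use the long exact sequence in $\mathrm{Tor}$. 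This gives
\begin{equation}\label{eq:torseq}
0 \to \mathrm{Tor}_1^\mathbb{Z}(\image(\varphi_\mathbb{Z}),\mathbb{F}_p) \to \Ker(\varphi_\mathbb{Z})\otimes_\mathbb{Z}\mathbb{F}_p \xrightarrow{\ \pi_p^l\ } S_p^l \to \image(\varphi_\mathbb{Z})\otimes_\mathbb{Z}\mathbb{F}_p \to 0,
\end{equation}
using that $S_\mathbb{Z}^l\otimes_\mathbb{Z}\mathbb{F}_p = S_p^l$ and that $\mathrm{Tor}_1^\mathbb{Z}(S_\mathbb{Z}^l,\mathbb{F}_p)=0$ since $S_\mathbb{Z}^l$ is $\mathbb{Z}$-free. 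The key point is then to identify the image of $\pi_p^l$ in $S_p^l$: by~\eqref{eq:torseq} it is exactly $\image(\varphi_\mathbb{Z})\otimes_\mathbb{Z}\mathbb{F}_p$ viewed inside $S_p^l = S_\mathbb{Z}^l\otimes\mathbb{F}_p$, but I want to show it equals $\Ker(\varphi_{(\A_p,\bold{m})}) = D(\A_p,\bold{m})$ — which, under the identification $S_p^l\cong S_p^l$ in the diagram, is the module $D(\A_p,\bold{m})$ sitting on the bottom row.

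Next I would bring in the cokernel. Tensoring the short exact sequence $0\to \image(\varphi_\mathbb{Z}) \to S_\mathbb{Z}^n/M(\A,\bold{m}) \to C \to 0$ with $\mathbb{F}_p$ and taking the $\mathrm{Tor}$ sequence yields
\begin{equation}\label{eq:torcoker}
0 \to \mathrm{Tor}_1^\mathbb{Z}(C,\mathbb{F}_p) \to \image(\varphi_\mathbb{Z})\otimes_\mathbb{Z}\mathbb{F}_p \to (S_\mathbb{Z}^n/M(\A,\bold{m}))\otimes_\mathbb{Z}\mathbb{F}_p \to C\otimes_\mathbb{Z}\mathbb{F}_p \to 0.
\end{equation}
Since $p$ is a good prime, $\pi_p^n$ identifies $(S_\mathbb{Z}^n/M(\A,\bold{m}))\otimes_\mathbb{Z}\mathbb{F}_p$ with $S_p^n/M(\A_p,\bold{m})$, so the composite $\image(\varphi_\mathbb{Z})\otimes\mathbb{F}_p \to S_p^n/M(\A_p,\bold{m})$ has image $\image(\varphi_{(\A_p,\bold{m})})$ and kernel $\mathrm{Tor}_1^\mathbb{Z}(C,\mathbb{F}_p)$. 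Now the hypothesis that $p$ is not a zero divisor on $C$ says precisely that multiplication by $p$ on $C$ is injective, i.e. $\mathrm{Tor}_1^\mathbb{Z}(C,\mathbb{F}_p) = C[p] = 0$ (the $p$-torsion of $C$ vanishes). Hence $\image(\varphi_\mathbb{Z})\otimes_\mathbb{Z}\mathbb{F}_p \hookrightarrow S_p^n/M(\A_p,\bold{m})$ is injective with image $\image(\varphi_{(\A_p,\bold{m})})$. Combining this with~\eqref{eq:torseq}: the image of $\pi_p^l\colon S_p^l\to S_p^l$ is $\image(\varphi_\mathbb{Z})\otimes\mathbb{F}_p$, which maps isomorphically onto $\image(\varphi_{(\A_p,\bold{m})})\subseteq S_p^n/M(\A_p,\bold{m})$; tracing through the commutativity of~\eqref{diagmapsZFp}, the element $\pi_p^l(x)$ lands in $\Ker(\varphi_{(\A_p,\bold{m})})=D(\A_p,\bold{m})$ for every $x$, and conversely every element of $D(\A_p,\bold{m})$, having trivial image in $\image(\varphi_{(\A_p,\bold{m})})\cong\image(\varphi_\mathbb{Z})\otimes\mathbb{F}_p$, is hit. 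Therefore $\pi_p^l\colon \Ker(\varphi_\mathbb{Z})\to D(\A_p,\bold{m})$ is surjective.

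The step I expect to be the main obstacle is the bookkeeping in the last paragraph: carefully verifying, via the commutativity of~\eqref{diagmapsZFp}, that "$\image(\pi_p^l) = \image(\varphi_\mathbb{Z})\otimes\mathbb{F}_p$ inside $S_p^l$" together with "$\mathrm{Tor}_1^\mathbb{Z}(C,\mathbb{F}_p)=0$" really forces $\image(\pi_p^l) = \Ker(\varphi_{(\A_p,\bold{m})})$ rather than just a submodule of it. The inclusion $\image(\pi_p^l)\subseteq D(\A_p,\bold{m})$ is automatic from the diagram; the reverse inclusion is where the no-$p$-torsion hypothesis on $C$ is genuinely used, because it is exactly what makes $\varphi_{(\A_p,\bold{m})}$ restricted to $\image(\pi_p^l)$ injective, so nothing in $D(\A_p,\bold{m})$ can escape being in the image. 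I would also need to double-check the good-prime hypothesis is what guarantees $\pi_p^n(M(\A,\bold{m})) = M(\A_p,\bold{m})$ (already noted in the text before~\eqref{diagmapsZFp}) and hence the identification of the right-hand vertical map's target; this is routine but must be invoked explicitly.
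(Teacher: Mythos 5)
Your argument is correct, but it takes a genuinely different route from the paper. The paper's proof is an elementary element chase by contradiction: it lifts a hypothetical $\delta\in D(\A_p,\bold{m})\setminus\pi_p^l(\Ker(\varphi_\mathbb{Z}))$ to some $\tilde\delta\in S_\mathbb{Z}^l$, observes that $\varphi_\mathbb{Z}(\tilde\delta)=pv$ with $v\neq 0$, and shows directly that $v\notin\image(\varphi_\mathbb{Z})$, so $p$ would be a zero divisor of the cokernel. Your version packages exactly this into the two $\mathrm{Tor}$ sequences, using that $p$ not a zero divisor on $C$ means $\mathrm{Tor}_1^\mathbb{Z}(C,\mathbb{F}_p)=C[p]=0$, hence $\image(\varphi_\mathbb{Z})\otimes\mathbb{F}_p\hookrightarrow S_p^n/M(\A_p,\bold{m})$, hence $\Ker(\varphi_{(\A_p,\bold{m})})$ coincides with the image of $\Ker(\varphi_\mathbb{Z})\otimes\mathbb{F}_p$ in $S_p^l$. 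What your approach buys is conceptual clarity (it isolates the no-$p$-torsion hypothesis as precisely the vanishing of a $\mathrm{Tor}_1$) at the cost of more machinery; the paper's proof is self-contained and needs nothing beyond the diagram. Two small points to tidy up. First, the sentence claiming ``the image of $\pi_p^l$ in $S_p^l$ is exactly $\image(\varphi_\mathbb{Z})\otimes_\mathbb{Z}\mathbb{F}_p$'' is a slip: by exactness of your first sequence the image of $\pi_p^l$ restricted to $\Ker(\varphi_\mathbb{Z})$ is the \emph{kernel} of the surjection $S_p^l\to\image(\varphi_\mathbb{Z})\otimes\mathbb{F}_p$ (the latter is a quotient of $S_p^l$, not a submodule); your final paragraph implicitly uses the correct statement, so the conclusion stands, but the wording should be fixed. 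Second, you write the second $\mathrm{Tor}$ sequence with a leading $0\to\mathrm{Tor}_1^\mathbb{Z}(C,\mathbb{F}_p)$, which requires $\mathrm{Tor}_1^\mathbb{Z}(S_\mathbb{Z}^n/M(\A,\bold{m}),\mathbb{F}_p)=0$; this does hold because each $S_\mathbb{Z}/\ideal{\alpha_i^{\bold{m}(H_i)}}$ is $p$-torsion-free (using that no prime divides any $\alpha_i$), but it should be checked --- or simply avoided, since your argument only needs that $\mathrm{Tor}_1^\mathbb{Z}(C,\mathbb{F}_p)=0$ forces injectivity of the next map, which is true without exactness at the leftmost term.
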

\begin{proof} Assume by absurd that there exists $\delta\in D(\A_p,\bold{m})\setminus\pi_p^l(\Ker(\varphi_\mathbb{Z}))$. Since the map $\pi_p^l\colon S_\mathbb{Z}^l\to S_p^l$ is surjective, there exists $\tilde{\delta}\in S_\mathbb{Z}^l\setminus\Ker(\varphi_\mathbb{Z})$ such that $\pi_p^l(\tilde{\delta})=i_p(\delta)$. Since $i_p(\delta)\ne0$, then $p$ does not divide $\tilde{\delta}$. By construction, $\varphi_\mathbb{Z}(\tilde{\delta})\ne0$ and $\pi_p^n(\varphi_\mathbb{Z}(\tilde{\delta}))=\varphi_{(\A_p,\bold{m})}(i_p(\delta))=0$. This implies that $\varphi_\mathbb{Z}(\tilde{\delta})=pv$, for some non-zero $v\in S_\mathbb{Z}^n/M(\A,\bold{m})$. To conclude we just need to show that $v$ does not belong to the image of the map $\varphi_\mathbb{Z}$, and hence that $p$ is a zero divisor of the cokernel of $\varphi_\mathbb{Z}$, leading to a contradiction.

Suppose now that there exists $\sigma\in S_\mathbb{Z}^l$ such that $\varphi_\mathbb{Z}(\sigma)=v$. Since $p$ does not divide $\tilde{\delta}$, then $\tilde{\delta}-p\sigma\ne0$. Moreover, $\varphi_\mathbb{Z}(\tilde{\delta}-p\sigma)=0$, and hence $\tilde{\delta}-p\sigma\in\Ker(\varphi_\mathbb{Z})$. By construction, $\pi_p^l(\tilde{\delta}-p\sigma)=\delta$, and hence $\delta\in\pi_p^l(\Ker(\varphi_\mathbb{Z}))$, but this is impossible. Hence, $v$ does not belong to the image of the map $\varphi_\mathbb{Z}$, as claimed.
\end{proof}

Putting together Theorem~\ref{theo:fromPtozero} and Proposition~\ref{prop:zerodivtosurg}, we obtain the following result.

\begin{Theorem}\label{theo:fromPtozerobis} Let $(\A,\bold{m})$ be a multiarrangement in $\mathbb{Q}^l$. Let $p$
be a good prime number for $(\A,\bold{m})$ that is not a zero divisor of $\coker(\varphi_\mathbb{Z})$. If $(\A_p,\bold{m})$ is free in $\mathbb{F}_p^l$ with exponents $(e_1,\dots, e_l)$, 
then $(\A,\bold{m})$ is free in $\mathbb{Q}^l$ with exponents $(e_1,\dots, e_l)$.
\end{Theorem}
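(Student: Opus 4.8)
The plan is to derive Theorem~\ref{theo:fromPtozerobis} as an immediate consequence of Theorem~\ref{theo:fromPtozero} and Proposition~\ref{prop:zerodivtosurg}, which together form a two-step chain. The hypothesis of Theorem~\ref{theo:fromPtozerobis} is precisely that $(\A,\bold{m})$ is a multiarrangement in $\mathbb{Q}^l$ and $p$ is a good prime for $(\A,\bold{m})$ that is not a zero divisor of $\coker(\varphi_\mathbb{Z})$. So the first step is to invoke Proposition~\ref{prop:zerodivtosurg}: since $p$ is good and not a zero divisor of $\coker(\varphi_\mathbb{Z})$, the reduction map $\pi_p^l\colon \Ker(\varphi_\mathbb{Z})\to D(\A_p,\bold{m})$ is surjective.

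Once surjectivity of $\pi_p^l$ is established, the second step is to feed this into Theorem~\ref{theo:fromPtozero}. That theorem requires exactly two inputs: that $p$ is a good prime for $(\A,\bold{m})$, and that $\pi_p^l\colon \Ker(\varphi_\mathbb{Z})\to D(\A_p,\bold{m})$ is surjective. Both are now available — the first directly from the hypothesis of Theorem~\ref{theo:fromPtozerobis}, the second from the application of Proposition~\ref{prop:zerodivtosurg} in the first step. Therefore, if $(\A_p,\bold{m})$ is free in $\mathbb{F}_p^l$ with exponents $(e_1,\dots,e_l)$, Theorem~\ref{theo:fromPtozero} concludes that $(\A,\bold{m})$ is free in $\mathbb{Q}^l$ with the same exponents $(e_1,\dots,e_l)$, which is exactly the desired statement.

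There is essentially no obstacle here, since the genuine content has already been distributed into the two preceding results: Theorem~\ref{theo:fromPtozero} handles the descent from $\mathbb{F}_p$ to $\mathbb{Q}$ via lifting a basis and a degree/Saito-criterion argument, while Proposition~\ref{prop:zerodivtosurg} translates the abstract surjectivity hypothesis into the more checkable condition on $\coker(\varphi_\mathbb{Z})$. The only thing to be careful about in writing the proof is to state the composition cleanly — first apply Proposition~\ref{prop:zerodivtosurg}, then Theorem~\ref{theo:fromPtozero} — and to note that the goodness hypothesis is used (implicitly) in both steps, since both the commutative diagram~\eqref{diagmapsZFp} and the identity $\pi_p^n(M(\A,\bold{m}))=M(\A_p,\bold{m})$ underlying Proposition~\ref{prop:zerodivtosurg} rely on $p$ being good. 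Thus the proof is a one-paragraph assembly: \emph{By Proposition~\ref{prop:zerodivtosurg}, the hypothesis that $p$ is good and not a zero divisor of $\coker(\varphi_\mathbb{Z})$ guarantees that $\pi_p^l\colon \Ker(\varphi_\mathbb{Z})\to D(\A_p,\bold{m})$ is surjective; the conclusion then follows by applying Theorem~\ref{theo:fromPtozero}.}
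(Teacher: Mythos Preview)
Your proposal is correct and matches the paper's approach exactly: the paper states this theorem as the result of ``putting together Theorem~\ref{theo:fromPtozero} and Proposition~\ref{prop:zerodivtosurg}'' with no further proof given, which is precisely the two-step composition you describe.
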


In Theorem \ref{theo:fromPtozerobis}, the assumption that the prime $p$ is not a zero divisor in the cokernel of the map $\varphi_\mathbb{Z}$ is fundamental. In fact we have the following.
\begin{Example} Consider the multiarrangement $(\A,\bold{m})$ in $\mathbb{Q}^3$ with defining polynomial $Q(\A,\bold{m})=x^2y^2(x-y)^2(x-z)^2(y-z)^2$. $(\A,\bold{m})$ is not free and $2$ is a zero divisor of in the cokernel of the map $\varphi_\mathbb{Z}$. In fact, 
$\varphi_\mathbb{Z}((x^2,y^2,z^2)^t)=(2x^2,2y^2,2(x^2-xy),2(x^2-xz),2(y^2-yz))^t$, however $(x^2,y^2,x^2-xy,x^2-xz,y^2-yz)^t$ is not in the image of $\varphi_\mathbb{Z}$. On the other hand, the multiarrangement $(\A_2,\bold{m})$ in $\mathbb{F}_2^3$ is free with exponents $(2,4,4)$.
\end{Example}

In general, given $M$ a finitely generated $S_\mathbb{Z}$-module, the number of zero divisor is infinite. However, if we restrict our attention to zero divisors that are prime numbers, we have the following.
\begin{Proposition}\label{prop:finitezerodivprimes} Let $M$ a finitely generated $S_\mathbb{Z}$-module. Then the number of distinct prime numbers that are zero divisors in $M$ is finite.
\end{Proposition}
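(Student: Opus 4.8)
The plan is to reduce the statement to the $\mathbb{Z}$-torsion part of $M$ and then exploit Noetherianity. First, observe that a prime number $p$ is a zero divisor in $M$ exactly when there is a nonzero $m\in M$ with $pm=0$; any such $m$ automatically lies in the $\mathbb{Z}$-torsion submodule
$$T=\{m\in M\mid nm=0\text{ for some }n\in\mathbb{Z}\setminus\{0\}\}.$$
Hence the set of prime numbers that are zero divisors in $M$ coincides with the set of prime numbers that are zero divisors in $T$, and it suffices to prove the claim for $T$.

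Next, since $\mathbb{Z}$ is Noetherian, the Hilbert basis theorem gives that $S_\mathbb{Z}=\mathbb{Z}[x_1,\dots,x_l]$ is Noetherian, so the finitely generated module $M$ is a Noetherian $S_\mathbb{Z}$-module and its submodule $T$ is again finitely generated, say $T=\langle t_1,\dots,t_k\rangle$. By the definition of $T$, each $t_j$ is annihilated by some nonzero integer $n_j$; setting $N=n_1\cdots n_k\ne0$ we get $N\cdot T=0$. Thus the $\mathbb{Z}$-action on $T$ factors through $\mathbb{Z}/N\mathbb{Z}$, i.e. $T$ is naturally a module over $(\mathbb{Z}/N\mathbb{Z})[x_1,\dots,x_l]$.

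Finally, let $p$ be a prime number not dividing $N$. Then $p$ is invertible modulo $N$, so multiplication by $p$ on $T$ is an isomorphism, its inverse being multiplication by an integer representing $p^{-1}$ in $\mathbb{Z}/N\mathbb{Z}$; in particular it is injective, so $p$ is not a zero divisor in $T$. Therefore every prime number that is a zero divisor in $T$, equivalently in $M$, must divide $N$, and there are only finitely many of those.

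The argument is essentially routine; the only real ingredient is the Noetherian property of $S_\mathbb{Z}$, used to guarantee that $T$ is finitely generated, so that a single nonzero integer $N$ annihilates all of it. An alternative, slightly more high-powered route is to recall that the set of zero divisors of a finitely generated module over a Noetherian ring is the union of its finitely many associated primes $P_1,\dots,P_r$; since each $P_i\cap\mathbb{Z}$ is a prime ideal of $\mathbb{Z}$, each $P_i$ contains at most one prime number, so $\bigcup_{i=1}^r P_i$ contains at most $r$ of them.
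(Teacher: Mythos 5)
Your proof is correct, and it takes a genuinely different and more elementary route than the paper. The paper invokes generic freeness (Theorem 14.4 of Eisenbud): there is a nonzero integer $a$ such that $M[a^{-1}]$ is a free $\mathbb{Z}[a^{-1}]$-module, hence torsion-free, so only primes dividing $a$ can be zero divisors. You instead isolate the $\mathbb{Z}$-torsion submodule $T$, use Noetherianity of $S_\mathbb{Z}$ (Hilbert basis theorem) to see that $T$ is finitely generated and therefore annihilated by a single nonzero integer $N$, and observe that any prime not dividing $N$ acts invertibly on $T$ and so cannot be a zero divisor; this is self-contained and avoids the generic freeness machinery, at the cost of a slightly longer argument. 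The reduction to $T$ is justified correctly (any $m$ with $pm=0$ and $m\ne 0$ lies in $T$), and the final step is sound. Your alternative remark via associated primes --- the zero divisors of a finitely generated module over a Noetherian ring form the union of its finitely many associated primes, each of which contracts to a prime ideal of $\mathbb{Z}$ and hence contains at most one prime number --- is also a valid third proof, arguably the cleanest of the three. All three arguments establish the proposition; yours and the associated-primes one have the advantage of not relying on the (comparatively heavy) generic freeness theorem cited in the paper.
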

\begin{proof}  By Theorem 14.4 of \cite{eisenbud}, there exists $a\in\mathbb{Z}\setminus\{0\}$ such that $M[a^{-1}]$ is a free $\mathbb{Z}[a^{-1}]$-module. This implies that 
the set of distinct prime numbers that are zero divisors in $M$ is included in the set of distinct prime numbers that divide $a$, that is finite by the unique factorization theorem.
\end{proof}

By applying Proposition \ref{prop:finitezerodivprimes} to the cokernel of the map $\varphi_\mathbb{Z}$, the number of prime numbers that are zero divisors in $\coker(\varphi_\mathbb{Z})$ is finite. Hence, putting together Corollary \ref{corol:fromchar0tocharP} 
and Theorem \ref{theo:fromPtozero}, we have the following.
\begin{Corollary}\label{corol:equivfreenesschar0p} Let $(\A,\bold{m})$ be a multiarrangement in $\mathbb{Q}^l$ and $p$ a large prime number. $(\A_p,\bold{m})$ is free in $\mathbb{F}_p^l$ with exponents $(e_1,\dots, e_l)$ if and only if
$(\A,\bold{m})$ is free in $\mathbb{Q}^l$ with exponents $(e_1,\dots, e_l)$.
\end{Corollary}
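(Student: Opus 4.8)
The plan is to combine the two directions already established. For the forward direction, I would invoke Corollary~\ref{corol:fromchar0tocharP}: if $(\A,\bold{m})$ is free in $\mathbb{Q}^l$ with exponents $(e_1,\dots,e_l)$, then for $p$ large enough $p$ is good for $(\A,\bold{m})$ and $(\A_p,\bold{m})$ is free in $\mathbb{F}_p^l$ with the same exponents. This gives one implication for all sufficiently large primes, with no extra work.

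For the reverse direction, I would use Theorem~\ref{theo:fromPtozero}, which requires two hypotheses: that $p$ is good for $(\A,\bold{m})$, and that the map $\pi_p^l\colon\Ker(\varphi_\mathbb{Z})\to D(\A_p,\bold{m})$ is surjective. By Lemma~\ref{lem:finitenotgoodprimes}, only finitely many primes fail to be good. By Proposition~\ref{prop:zerodivtosurg}, the surjectivity of $\pi_p^l$ holds whenever $p$ is not a zero divisor of $\coker(\varphi_\mathbb{Z})$; and since $S_\mathbb{Z}^n/M(\A,\bold{m})$ is a finitely generated $S_\mathbb{Z}$-module, so is its quotient $\coker(\varphi_\mathbb{Z})$, hence by Proposition~\ref{prop:finitezerodivprimes} only finitely many primes are zero divisors in $\coker(\varphi_\mathbb{Z})$. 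Therefore, excluding the finitely many bad primes and the finitely many zero-divisor primes, every remaining (in particular every sufficiently large) prime $p$ satisfies the hypotheses of Theorem~\ref{theo:fromPtozero}, so freeness of $(\A_p,\bold{m})$ with exponents $(e_1,\dots,e_l)$ forces freeness of $(\A,\bold{m})$ with the same exponents.

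Putting the two together: there is a finite set $P$ of primes (the union of the non-good primes, the zero-divisor primes of $\coker(\varphi_\mathbb{Z})$, and the finitely many extra primes excluded in the proof of Theorem~\ref{theo:fromchar0tocharP}) outside of which both implications hold simultaneously. Hence for $p\notin P$—i.e. for $p$ large—$(\A_p,\bold{m})$ is free in $\mathbb{F}_p^l$ with exponents $(e_1,\dots,e_l)$ if and only if $(\A,\bold{m})$ is free in $\mathbb{Q}^l$ with exponents $(e_1,\dots,e_l)$.

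I do not expect a genuine obstacle here, since this is purely an assembly of previously proven statements; the only point requiring a moment's care is confirming that $\coker(\varphi_\mathbb{Z})$ is finitely generated over $S_\mathbb{Z}$ (immediate, as a quotient of $S_\mathbb{Z}^n/M(\A,\bold{m})$) so that Proposition~\ref{prop:finitezerodivprimes} applies, and bookkeeping the finitely many excluded primes correctly so that the phrase ``large prime number'' is justified.
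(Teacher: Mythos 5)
Your proposal is correct and follows exactly the paper's route: the forward direction is Corollary~\ref{corol:fromchar0tocharP}, and the reverse direction is Theorem~\ref{theo:fromPtozero} made applicable for all but finitely many primes via Proposition~\ref{prop:zerodivtosurg} and Proposition~\ref{prop:finitezerodivprimes} applied to $\coker(\varphi_\mathbb{Z})$. Your bookkeeping of the finitely many excluded primes is the same as (and slightly more explicit than) the paper's.
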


In \cite{palezzato2018free}, the authors studied the freeness of simple arrangements and
in Theorem~6.1, they proved the following.

\begin{Theorem}[\cite{palezzato2018free}]\label{theo:fromPtozeroOLD} Let $\A=(\A,\bold{1})$ be a simple central arrangement in $\mathbb{Q}^l$ and $J(\A)_\mathbb{Z}$ the ideal of $S_\mathbb{Z}$ generated by $Q(\A)=Q(\A,\bold{1})$ and its partial derivatives. Let $p$
be a good prime number for $\A$ that is not a zero divisor in $S_\mathbb{Z}/J(\A)_\mathbb{Z}$. If $\A_p$ is free in $\mathbb{F}_p^l$ with exponents $(e_1,\dots, e_l)$, then $\A$ is free in $\mathbb{Q}^l$ with exponents $(e_1,\dots, e_l)$.
\end{Theorem}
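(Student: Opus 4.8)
The plan is to reproduce the development of Section~\ref{sec:fromPtozerochar} almost verbatim, with the multiplication map $\varphi_\mathbb{Z}$ of~\eqref{eq:importantmapdefZ} replaced by the map attached to the Jacobian ideal. The starting observation is that, since $\A$ is simple, $D(\A)=D(\A,\bold{1})=\{\delta\in\Der_{\mathbb{Q}^l}\mid\delta(Q(\A))\in\ideal{Q(\A)}\}$, the coincidence recalled right after the definition of $D(\A,\bold{m})$. So I would introduce the $S_\mathbb{Z}$-linear map $\psi_\mathbb{Z}\colon S_\mathbb{Z}^{l+1}\to S_\mathbb{Z}$ sending $(g_0,g_1,\dots,g_l)$ to $g_0\,Q(\A)+\sum_{j=1}^l g_j\,\partial_{x_j}Q(\A)$; its image is exactly $J(\A)_\mathbb{Z}$, hence $\coker(\psi_\mathbb{Z})=S_\mathbb{Z}/J(\A)_\mathbb{Z}$. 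The assignment $(g_0,\dots,g_l)\mapsto\sum_{j=1}^l g_j\,\partial_{x_j}$ identifies $\Ker(\psi_\mathbb{Q})$ with $D(\A)$ and $\Ker(\psi_p)$ with $D(\A_p)$: it is injective because $S$ is a domain, and surjective because $\delta(Q)\in\ideal{Q}$ says precisely that $\delta(Q)=-g_0Q$ for some $g_0$. Together with the flatness of $\mathbb{Z}\hookrightarrow\mathbb{Q}$ (so that $\Ker$ commutes with $-\otimes_\mathbb{Z}\mathbb{Q}$), this yields the analogues of Proposition~\ref{prop:deraskernel} and Lemma~\ref{lemma:DerfromZtoQ}, namely $D(\A)\cong\Ker(\psi_\mathbb{Z})\otimes_\mathbb{Z}\mathbb{Q}$.

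Since $p$ is good for $\A$, one has $\pi_p(Q(\A))=Q(\A_p)$ and $\pi_p(\partial_{x_j}Q(\A))=\partial_{x_j}Q(\A_p)$, because reduction modulo $p$ commutes with differentiation; thus $\psi$ is compatible with the reductions $\pi_p^{l+1}\colon S_\mathbb{Z}^{l+1}\to S_p^{l+1}$ and $\pi_p\colon S_\mathbb{Z}\to S_p$, and I get a commutative diagram analogous to~\eqref{diagmapsZFp} with top row $\Ker(\psi_\mathbb{Z})\hookrightarrow S_\mathbb{Z}^{l+1}\xrightarrow{\psi_\mathbb{Z}}S_\mathbb{Z}$, bottom row $D(\A_p)\hookrightarrow S_p^{l+1}\xrightarrow{\psi_p}S_p$, and vertical arrows the reductions modulo $p$ (composed with the identifications above). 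With this diagram, the proof of Proposition~\ref{prop:zerodivtosurg} transfers word for word, replacing $\varphi_\mathbb{Z}$ by $\psi_\mathbb{Z}$ and the quotient codomains by $S_\mathbb{Z}$ and $S_p$: if $p$ is not a zero divisor of $\coker(\psi_\mathbb{Z})=S_\mathbb{Z}/J(\A)_\mathbb{Z}$, then $\pi_p^{l+1}\colon\Ker(\psi_\mathbb{Z})\to D(\A_p)$ is surjective.

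It then remains to run the proof of Theorem~\ref{theo:fromPtozero} in this setting. Assume $(\A_p,\bold{1})$ is free in $\mathbb{F}_p^l$ with exponents $(e_1,\dots,e_l)$ and pick a homogeneous basis $\delta_1,\dots,\delta_l$ of $D(\A_p)$ with $\pdeg(\delta_i)=e_i$. As $\pi_p^{l+1}$ and the surjection onto $D(\A_p)$ are graded, each $\delta_i$ lifts to a homogeneous $\tilde\delta_i\in\Ker(\psi_\mathbb{Z})$ of the appropriate degree, which I regard as an element of $D(\A)$ with $\pdeg(\tilde\delta_i)=e_i$. By Remark~\ref{rem:detdivisoblebyeq}, $\det(\tilde\delta_i(x_j))_{i,j}=h\,Q(\A)$ for some $h\in S_\mathbb{Z}$, and the degree count $\sum_i e_i=|\bold{1}|=\deg Q(\A)$ forces $h\in\mathbb{Z}$; if $h=0$ then reducing modulo $p$ gives $\det(\delta_i(x_j))_{i,j}=0$, contradicting Saito's criterion (Theorem~\ref{theo:saitocrit}) for the free arrangement $(\A_p,\bold{1})$. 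Hence $h\in\mathbb{Z}\setminus\{0\}$, and Theorem~\ref{theo:saitocrit} applied to $\tilde\delta_1,\dots,\tilde\delta_l$ shows they form a basis of $D(\A)$, so $\A$ is free in $\mathbb{Q}^l$ with exponents $(e_1,\dots,e_l)$. Note that one cannot merely invoke Theorem~\ref{theo:fromPtozerobis} with $\bold{m}=\bold{1}$: in general $\coker(\varphi_{(\A,\bold{1}),\mathbb{Z}})\ne S_\mathbb{Z}/J(\A)_\mathbb{Z}$ (already for the boolean arrangement $Q(\A)=x_1x_2$ the former vanishes while the latter is $\mathbb{Z}$), so the hypothesis phrased in terms of $S_\mathbb{Z}/J(\A)_\mathbb{Z}$ genuinely needs the Jacobian presentation above.

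I expect the bulk of the care to go into the first step: making the identification $\Ker(\psi)\cong D(\,\cdot\,)$ functorial in the coefficient ring and strictly compatible with $\pi_p$, so that the diagram commutes and reductions of homogeneous kernel elements remain homogeneous kernel elements of the intended degree, and checking that $J(\A)_\mathbb{Z}$ behaves well under $-\otimes_\mathbb{Z}\mathbb{Q}$. Once that bookkeeping is settled, the remaining two steps are purely formal transcriptions of Proposition~\ref{prop:zerodivtosurg} and Theorem~\ref{theo:fromPtozero}, requiring no new idea.
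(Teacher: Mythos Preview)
Your argument is correct, and in fact it is closer to the original proof in \cite{palezzato2018free} than to what the present paper does. Here the paper does not re-prove Theorem~\ref{theo:fromPtozeroOLD} from scratch; instead it deduces it from Theorem~\ref{theo:fromPtozerobis} via the Proposition stated immediately afterwards, which shows that for a simple arrangement a prime $p$ is a zero divisor in $S_\mathbb{Z}/J(\A)_\mathbb{Z}$ if and only if it is a zero divisor in $\coker(\varphi_\mathbb{Z})$. The key point there is that the map $\psi\colon S_\mathbb{Z}^l\to S_\mathbb{Z}/\ideal{Q(\A)}$, $(g_1,\dots,g_l)\mapsto\sum_j g_j\,\partial_{x_j}Q(\A)$, has the \emph{same} kernel in $S_\mathbb{Z}^l$ as $\varphi_\mathbb{Z}$ (precisely because $D(\A,\bold{1})=\{\delta\mid\delta(Q)\in\ideal{Q}\}$), so by the first isomorphism theorem the images are isomorphic, and the equivalence of zero-divisor conditions follows. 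Your route bypasses this comparison by running the analogues of Proposition~\ref{prop:zerodivtosurg} and Theorem~\ref{theo:fromPtozero} with the Jacobian presentation $\psi_\mathbb{Z}\colon S_\mathbb{Z}^{l+1}\to S_\mathbb{Z}$ from the outset; the payoff is that the cokernel you see is literally $S_\mathbb{Z}/J(\A)_\mathbb{Z}$, so no translation lemma is needed. One correction to your closing remark: it is true that $\coker(\varphi_\mathbb{Z})$ and $S_\mathbb{Z}/J(\A)_\mathbb{Z}$ are not isomorphic in general (your boolean example is apt), but the paper never claims they are---it only claims, and uses, that the two conditions ``$p$ is a zero divisor'' agree, which is exactly enough to invoke Theorem~\ref{theo:fromPtozerobis} with $\bold{m}=\bold{1}$.
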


In the case of simple arrangements, Theorem~\ref{theo:fromPtozerobis} is exactly Theorem~\ref{theo:fromPtozeroOLD}. In fact we have the following.

\begin{Proposition} Let $\A=(\A,\bold{1})$ be a simple central arrangement in $\mathbb{Q}^l$. Assume that $p$ is a good prime for $\A$. Then $p$ is a zero divisor of $S_\mathbb{Z}/J(\A)_\mathbb{Z}$ if and only if $p$ is a zero divisor of $\coker(\varphi_\mathbb{Z})$.
\end{Proposition}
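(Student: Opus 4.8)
The plan is to construct an explicit short exact sequence of $S_\mathbb{Z}$-modules comparing $\coker(\varphi_\mathbb{Z})$ with $S_\mathbb{Z}/J(\A)_\mathbb{Z}$ whose third term has no $p$-torsion as soon as $p$ is good, and then to deduce the statement from the snake lemma applied to multiplication by $p$. Since we are in the simple case $\bold m\equiv\bold 1$, we have $M(\A,\bold 1)=\bigoplus_{i=1}^n\alpha_i S_\mathbb{Z}$, so $S_\mathbb{Z}^n/M(\A,\bold 1)=\bigoplus_{i=1}^n S_\mathbb{Z}/\alpha_i S_\mathbb{Z}$, and $A(\A)=(a_{ij})$ is the constant coefficient matrix of the linear forms $\alpha_i=\sum_j a_{ij}x_j$. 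Put $Q:=Q(\A)$, write $Q/\alpha_i$ for the polynomial $\prod_{k\ne i}\alpha_k$, and set $K_\mathbb{Z}:=\langle Q/\alpha_1,\dots,Q/\alpha_n\rangle\subseteq S_\mathbb{Z}$.

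First I would introduce the $S_\mathbb{Z}$-linear map
$$\bar\rho\colon\coker(\varphi_\mathbb{Z})\longrightarrow S_\mathbb{Z}/J(\A)_\mathbb{Z},\qquad \big[(f_1,\dots,f_n)\big]\longmapsto \Big[\sum\nolimits_{i=1}^n f_i\,(Q/\alpha_i)\Big],$$
and check it is well defined: $\alpha_i\cdot(Q/\alpha_i)=Q\in J(\A)_\mathbb{Z}$ kills $M(\A,\bold 1)$, and $\partial_{x_j}Q=\sum_i a_{ij}(Q/\alpha_i)\in J(\A)_\mathbb{Z}$ kills $\image(\varphi_\mathbb{Z})$. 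These same two identities give $J(\A)_\mathbb{Z}\subseteq K_\mathbb{Z}$, so $\image(\bar\rho)=K_\mathbb{Z}/J(\A)_\mathbb{Z}$ and $\coker(\bar\rho)\cong S_\mathbb{Z}/K_\mathbb{Z}$. The essential point is that $\bar\rho$ is injective, which reduces to the fact that the map
$$\tau\colon\bigoplus_{i=1}^n S_\mathbb{Z}/\alpha_i S_\mathbb{Z}\longrightarrow S_\mathbb{Z}/QS_\mathbb{Z},\qquad (f_i)_i\longmapsto \sum\nolimits_i f_i\,(Q/\alpha_i)\bmod Q,$$
is injective; this holds because the $\alpha_i$ are pairwise non-associate primes of the UFD $S_\mathbb{Z}$, so $\alpha_{i_0}$ is coprime to $Q/\alpha_{i_0}$, and reducing a relation $\sum_i c_i\,(Q/\alpha_i)\in(Q)$ modulo $\alpha_{i_0}$ — where $Q$ and every $Q/\alpha_i$ with $i\ne i_0$ vanish — forces $\alpha_{i_0}\mid c_{i_0}$. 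Granting injectivity of $\tau$: if $\sum_i f_i\,(Q/\alpha_i)\in J(\A)_\mathbb{Z}=(Q)+\langle\partial_{x_1}Q,\dots,\partial_{x_l}Q\rangle$, one rewrites the right-hand side as $gQ+\sum_i\big(\sum_j a_{ij}h_j\big)(Q/\alpha_i)$ and concludes $f_i\equiv\sum_j a_{ij}h_j\pmod{\alpha_i}$ for all $i$, i.e. $(f_i)_i\in\image(\varphi_\mathbb{Z})$. This yields the short exact sequence
$$0\longrightarrow\coker(\varphi_\mathbb{Z})\xrightarrow{\ \bar\rho\ }S_\mathbb{Z}/J(\A)_\mathbb{Z}\longrightarrow S_\mathbb{Z}/K_\mathbb{Z}\longrightarrow 0,$$
valid for any simple central arrangement over $\mathbb{Z}$, with no hypothesis on $p$.

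Next I would verify that $S_\mathbb{Z}/K_\mathbb{Z}$ has no $p$-torsion when $p$ is good — this is exactly where goodness is used. The same $\tau$ fits into a short exact sequence $0\to\bigoplus_i S_\mathbb{Z}/\alpha_i\xrightarrow{\ \tau\ }S_\mathbb{Z}/Q\to S_\mathbb{Z}/K_\mathbb{Z}\to 0$, since $\coker(\tau)=S_\mathbb{Z}/(Q,Q/\alpha_1,\dots,Q/\alpha_n)=S_\mathbb{Z}/K_\mathbb{Z}$. Both $\bigoplus_i S_\mathbb{Z}/\alpha_i$ and $S_\mathbb{Z}/Q$ are $\mathbb{Z}$-torsion-free: each $\alpha_i$ is primitive by the standing assumption, hence so is $Q=\prod_i\alpha_i$ by Gauss's lemma, and a polynomial ring over $\mathbb{Z}$ modulo a primitive polynomial is $\mathbb{Z}$-torsion-free. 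So the snake lemma for multiplication by $p$ on that sequence identifies the $p$-torsion of $S_\mathbb{Z}/K_\mathbb{Z}$ with $\Ker\big(\bigoplus_i S_p/\pi_p(\alpha_i)\xrightarrow{\ \bar\tau\ }S_p/\pi_p(Q)\big)$, where $\bar\tau$ is the reduction of $\tau$ modulo $p$. Since $p$ is good, $\pi_p(Q)=\prod_i\pi_p(\alpha_i)$ is reduced with pairwise non-associate irreducible factors $\pi_p(\alpha_i)$, so the coprimality argument above applies verbatim and $\bar\tau$ is injective; hence $S_\mathbb{Z}/K_\mathbb{Z}$ has no $p$-torsion.

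Finally, applying the snake lemma for multiplication by $p$ to the short exact sequence of the first step, the vanishing of the $p$-torsion of $S_\mathbb{Z}/K_\mathbb{Z}$ collapses the resulting six-term sequence into an isomorphism between the $p$-torsion of $\coker(\varphi_\mathbb{Z})$ and that of $S_\mathbb{Z}/J(\A)_\mathbb{Z}$. Since $p$ is a zero divisor of an $S_\mathbb{Z}$-module $M$ precisely when $M$ has nonzero $p$-torsion, this gives the claimed equivalence. The hard part will be the injectivity of $\bar\rho$ — equivalently, pinning down $\image(\varphi_\mathbb{Z})$ inside $\bigoplus_i S_\mathbb{Z}/\alpha_i$ through the map $\tau$ — together with the identification $\coker(\bar\rho)\cong S_\mathbb{Z}/K_\mathbb{Z}$; once the short exact sequence of the first step is in hand, the rest is formal, with the role of the good prime being exactly to keep $\tau$ injective after reduction modulo $p$.
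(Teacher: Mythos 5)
Your proof is correct, and it is in substance a completed version of the paper's argument rather than a literally identical one. The paper's proof introduces the map $\psi\colon S_\mathbb{Z}^l\to S_\mathbb{Z}/\ideal{Q(\A)}S_\mathbb{Z}$, $(g_j)\mapsto\sum_j g_j\,\partial Q/\partial x_j$, observes that $\coker(\psi)\cong S_\mathbb{Z}/J(\A)_\mathbb{Z}$ and that $\Ker(\psi)=\Ker(\varphi_\mathbb{Z})$, and then concludes from the first isomorphism theorem that the two images are isomorphic, ``and hence'' that $p$ is a zero divisor of one cokernel iff it is of the other. As written, that last step is a leap: isomorphic images sitting inside different ambient modules need not have isomorphic (or even zero-divisor-equivalent) cokernels, and the paper's proof never visibly uses the hypothesis that $p$ is good. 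Your map $\tau$ is exactly the injection through which $\psi$ factors as $\psi=\tau\circ\varphi_{\mathbb{Z}}$ (which is what underlies the paper's kernel identity), and your contribution is to push $\tau$ down to the injection $\bar\rho$ of cokernels, identify $\coker(\bar\rho)\cong S_\mathbb{Z}/K_\mathbb{Z}$ with $K_\mathbb{Z}=\langle Q/\alpha_1,\dots,Q/\alpha_n\rangle$, and show via the snake lemma that goodness of $p$ is precisely what kills the $p$-torsion of $S_\mathbb{Z}/K_\mathbb{Z}$, so that $\coker(\varphi_\mathbb{Z})$ and $S_\mathbb{Z}/J(\A)_\mathbb{Z}$ have the same $p$-torsion. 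This buys a genuinely complete argument and makes transparent where each hypothesis enters; note that the injectivity of $\bar\rho$ alone already gives the implication from $\coker(\varphi_\mathbb{Z})$ to $S_\mathbb{Z}/J(\A)_\mathbb{Z}$ without any assumption on $p$, while the converse is where goodness is genuinely needed. One small caveat: in the step where you assert that the $\pi_p(\alpha_i)$ are pairwise non-associate, you are using the intended meaning of ``good prime'' (the reduced forms are pairwise non-proportional); the paper's literal definition only requires $\pi_p(\alpha_i)\ne\pi_p(\alpha_j)$, which does not formally exclude $\pi_p(\alpha_i)$ being a nontrivial scalar multiple of $\pi_p(\alpha_j)$. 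This is a defect of the paper's definition rather than of your argument, but it is worth flagging since your injectivity argument for $\bar\tau$ genuinely needs non-associateness.
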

\begin{proof} Consider the map of $S_\mathbb{Z}$-modules $\psi\colon S_\mathbb{Z}^l\to S_\mathbb{Z}/\ideal{Q(\A)}S_\mathbb{Z}$ defined by $(g_1, \dots, g_l)^t\mapsto \sum_{i=1}^l g_i \partial Q(\A)/ \partial x_i$. By construction, the image of $\psi$ is $J(\A)_\mathbb{Z}S_\mathbb{Z}/\ideal{Q(\A)}S_\mathbb{Z}$, and hence $\coker(\psi)\cong S_\mathbb{Z}/J(\A)_\mathbb{Z}$.

Since $D(\A)=D(\A,\bold{1})=\{\delta\in \Der_{\mathbb{Q}^l}~|~ \delta(Q(\A)) \in \ideal{Q(\A)}S\}$, we have that $\Ker(\psi)\cong\Ker(\varphi_\mathbb{Z})$. By the first isomorphism theorem for modules, the image of $\psi$ and $\varphi_\mathbb{Z}$ are isomorphic and hence we have that $p$ is a zero divisor of $S_\mathbb{Z}/J(\A)_\mathbb{Z}$ if and only if $p$ is a zero divisor of $\coker(\varphi_\mathbb{Z})$.
\end{proof}

\bigskip
\paragraph{\textbf{Acknowledgements}} The author would like to thank E. Palezzato and G. Roehrle for many helpful discussions. During the preparation of this article the author was supported by JSPS Grant-in-Aid for Early-Career Scientists (19K14493). For details on how to perform the computations with (multi)arrangements, see \cite{palezzato2018hyperplane}.

\bibliography{bibliothesis}{}
\bibliographystyle{plain}

\end{document}